\renewcommand*\subjclass[2][2000]{%
  \def\@subjclass{#2}%
  \@ifundefined{subjclassname@#1}{%
    \ClassWarning{\@classname}{Unknown edition (#1) of Mathematics
      Subject Classification; using '1991'.}%
  }{%
    \@xp\let\@xp\subjclassname\csname subjclassname@#1\endcsname
  }%
}
\newtheorem{theorem}{Theorem}[section]
\newtheorem{lemma}[theorem]{Lemma}
\newtheorem*{lemma*}{Lemma}
\newtheorem{proposition}[theorem]{Proposition}
\newtheorem{corollary}[theorem]{Corollary}
\theoremstyle{definition}
\theoremstyle{remark}
\newtheorem{remark}[theorem]{Remark}
\numberwithin{equation}{section}
\def\XXint#1#2#3{{\setbox0=\hbox{$#1{#2#3}{\int}$}
\vcenter{\hbox{$#2#3$}}\kern-.5\wd0}}
\def\le{\leqslant}
\def\ge{\geqslant}
\begin{document}
\title[Muckenhoupt weights and Lindel\"of theorem]{Muckenhoupt weights and Lindel\"of theorem for harmonic mappings}
\subjclass{Primary 31A05; Secondary 31B25}
\author{David Kalaj}
\address{University of Montenegro, Faculty of Natural Sciences and
Mathematics, Cetinjski put b.b. 81000 Podgorica, Montenegro}
\email{davidkalaj@gmail.com}

\keywords{Harmonic mappings, chord-arc condition, Muckenhoupt weights }

\begin{abstract}
We extend the result of Lavrentiev which asserts that the harmonic measure and the arc-length measure are $A_\infty$ equivalent in a chord-arc Jordan domain. By using this result we extend the classical result of Lindel\"of  to the class of quasiconformal (q.c.)  harmonic mappings by proving the following assertion. Assume that $f$ is a quasiconformal harmonic  mapping of the unit disk $\mathbf{U}$ onto a Jordan domain. Then the function $A(z)=\arg(\partial_\varphi(f(z))/z)$ where $z=re^{i\varphi}$, is well-defined and smooth in $\mathbf{U}^*=\{z: 0<|z|<1\}$ and  has a continuous extension to the boundary of the unit disk if and only if the image domain has $C^1$ boundary.
\end{abstract}

\maketitle
\tableofcontents
\section{Introduction and statement of the main results}
\subsection{Quasiconformal mappings}  By definition,  $K$-quasiconformal mappings (or shortly q.c. mappings) are orientation preserving homeomorphisms $f:D \to \Omega$ between domains $D,\Omega \subset \mathbf{C}$, contained  in the Sobolev class $W^{1,2}_{loc}(D)$, for which  the differential matrix   and its determinant are coupled in the distortion inequality,
\begin{equation}\label{distortion}
 |D\!f(z)|^2  \leq K\, \det D\!f(z)\;,\quad \textrm{where}\;\;\;|D\!f(z)|  = \max_{|\xi| =1} \; |D\!f(z) \xi|,
\end{equation}
for some  $K \geq1$. Here $\det D\!f(z)$ is the determinant of the formal derivative $D\!f(z)$.  Note that the
condition (\ref{distortion}) can be written in complex notation as \begin{equation}\label{same}{(|f_z|+|f_{\bar z}|)^2}\le K({|f_z|^2-|f_{\bar z}^2|})\quad \text{a.e. on $D$}\end{equation} or what is the same,  $$|f_{\bar z}|\le
k|f_z|\quad \text{a.e. on $D$ where $k=\frac{K-1}{K+1}$ i.e.,
$K=\frac{1+k}{1-k}$ }.$$

\subsection{Harmonic mappings}
A mapping  $f$ is called \emph{harmonic}
in a region $D$ if it has the form  $f=u+iv$ where $u$ and $v$ are
real-valued harmonic functions in $D$. If $D$ is simply-connected,
then there are two analytic functions $a$ and $b$ defined on $D$
such that $f$ has the representation $$f=a+\overline b.$$

If $f$ is a harmonic univalent function, then by Lewy's theorem
(see \cite{l}), $f$ has a non-vanishing Jacobian and consequently,
according to the inverse mapping theorem, $f$ is a diffeomorphism.

%We denote by $QCH$ the family of harmonic quasiconformal
%functions.
Let $$P(r,x-\varphi)=\frac{1-r^2}{2\pi
(1-2r\cos(x-\varphi)+r^2)}$$ denote the Poisson kernel. If $f^*\in L^1(\mathbf{T})$, where $\mathbf{T}$ is the unit circle, then we define the Poisson integral $\mathcal{P}[f^*]$ of $f^*$ by formula \begin{equation}\label{e:POISSON}
\mathcal{P}[f^*](z)=\int_0^{2\pi}P(r,x-\varphi)f^*(e^{ix})dx, \ \ |z|<1.
\end{equation} The function $f(z)=\mathcal{P}[f^*](z)$ is a harmonic mapping in the unit disk $\mathbf{U}=\{z:|z|<1\}$, which belongs to the harmonic Hardy space $h^1(\mathbf{U})$. The mapping $f$ is bounded in $\mathbf{U}=\{z:|z|<1\}$ if and only if $f^*\in L^\infty(\mathbf{T}).$ We will simultaneously use the notation $f$ for $\mathcal{P}[f^*]$ and its boundary function $f^*$ in order to have a better exposition.

 It is well-known that the Poisson integral  $\mathcal{P}[f^*]$ extends by continuity to $f^*$ on $\overline{\mathbf{U}}$, provided that $f^*$ is continuous. For this fact and standard properties of harmonic Hardy space $h^1(\mathbf{U})$ we refer to \cite[Chapter~6]{abr} and \cite{Pd}. With the additional assumption that $f^*$ is an orientation-preserving homeomorphism of this circle onto a convex Jordan curve $\gamma$, $\mathcal{P}[f^*]$ is an orientation preserving diffeomorphism of the open unit disk. This is actually the celebrated theorem of Choquet-Rado-Kneser (\cite{duren}).
This theorem is not true for non-convex domains, but hold true under some additional assumptions.
It has been extended in various
directions (see for example \cite{jj, studia, ale, dur}).

The class of all conformal mapping of the unit disk onto itself coincides with the set of all M\"obius transformations and thus the class is rigid in some sense. The classes of quasiconformal harmonic mapping with respect to the Euclidean metric proved to be a significant topic of geometric function theory recently. The pioneering work in this subject have been done by Martio in \cite{martio}. We refer to the papers \cite{studia,mathz,pisa, trans,MMM, MP,pk} for some recent progress in this class. In these papers has been in particular treated the Lipschitz and bi-Lipschitz character of this class assuming that the domain and the image domain have $C^{k,\alpha}$  boundaries with $k\ge 1$, $\alpha>0$. In papers \cite{kojic,matvuo}  some regularity results of Quasiconformal harmonic mappings with non-necessary smooth domains have been treated.

For the topic of quasiconformal mappings harmonic w.r.t. the hyperbolic metric and connection with Teichm\"uller spaces we refer to the paper \cite{markovic} and the references therein.

\subsection{The Lindel\"of Theorem}
An analytic characterization of the smoothness of a Jordan domain  is given by the classical
Lindel\"of \cite{lin,martio} theorem:
\begin{proposition} Let $f$ map $\mathbf{U}$ conformally onto the inner domain of a Jordan
curve $\gamma$. Then $\gamma$ is smooth if and only if $\arg\, f'(z)$ has a continuous
extension to ${\mathbf{T}}$, which we denote by $f(e^{i\varphi})$. If $\gamma$ is smooth, then
\begin{equation}\label{lineq}\arg\, f'(e^{i\varphi}) = \beta(\varphi)-\varphi-\frac{\pi}{2}\end{equation} where $\beta(\varphi)$ stands for the tangent angle of the curve $\gamma$ at the point $f(e^{i\varphi})$.
\end{proposition}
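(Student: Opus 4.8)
The plan is to extend $f$ to the boundary (Carath\'eodory), prove a pointwise lemma on the chord quotient $\frac{f(z)-w_0}{z-\zeta_0}$ (which is where the tangent of $\gamma$ is used), transfer it to $f'$, and obtain the converse by reversing the argument with Zygmund's estimate. In detail: $f$ extends to a homeomorphism $\overline{\mathbf{U}}\to\overline{G}$, $G$ the inner domain of $\gamma$; write $w(\varphi)=f(e^{i\varphi})$. The lemma is: \emph{if $\gamma$ has a tangent at $w_0=f(\zeta_0)$, $\zeta_0=e^{i\varphi_0}$, with oriented tangent angle $\beta_0$, then $\arg\frac{f(z)-w_0}{z-\zeta_0}\to\beta_0-\varphi_0-\frac{\pi}{2}$ as $z\to\zeta_0$ in $\mathbf{U}$.} To prove it, normalise so that $\zeta_0=1$, $w_0=0$, $\beta_0=0$ (then $\gamma$ leaves $0$ along the positive real axis as $\varphi$ increases, and, by orientation, $G$ lies locally above $\mathbf{R}$). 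Smoothness at $0$ says that for every $\varepsilon>0$ there is $\rho>0$ confining $\gamma$ near $0$ to a thin double-wedge about $\mathbf{R}$ and $G$ near $0$ to a wedge of directions of opening slightly more than $\pi$. Hence $\arg f(z)$ is bounded for $z$ near $1$ (and, since $f$ is continuous on $\overline{\mathbf{U}}$ and nonzero off $1$, bounded on all of $\mathbf{U}$), so $H(z):=\operatorname{Im}\log\frac{f(z)}{z-1}=\arg\frac{f(z)}{z-1}$ is a bounded harmonic function on $\mathbf{U}$. At $\varphi=0$ both $\arg f(e^{i\varphi})$ and $\arg(e^{i\varphi}-1)$ have one-sided limits, each jumping by $\pi$; the jumps cancel, so the boundary function of $H$ is continuous at $1$ with value $-\pi/2$. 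Since $H=\mathcal{P}[H^{*}]$ and $H^{*}$ is continuous at $1$, $H(z)\to-\pi/2$ as $z\to1$; undoing the normalisation gives the lemma.

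Assume now $\gamma$ is smooth, so the lemma holds at every $\zeta_0$ and $\beta(\varphi)$ is continuous. Putting $\psi(z):=\log\frac{f(z)-w_0}{z-\zeta_0}$, one has $\frac{(z-\zeta_0)f'(z)}{f(z)-w_0}=1+(z-\zeta_0)\psi'(z)$ and therefore $\arg f'(z)=\operatorname{Im}\psi(z)+\arg\bigl(1+(z-\zeta_0)\psi'(z)\bigr)$. The lemma gives $\operatorname{Im}\psi(z)\to\beta(\varphi_0)-\varphi_0-\frac{\pi}{2}$; and since $\operatorname{Im}\psi$ is bounded and has a limit at $\zeta_0$, $\psi$ maps small discs about points of a Stolz angle at $\zeta_0$ into ever thinner horizontal strips, so $(z-\zeta_0)\psi'(z)\to0$ nontangentially by the Schwarz--Pick estimate; hence $\arg f'(z)\to\beta(\varphi_0)-\varphi_0-\frac{\pi}{2}$ nontangentially, and --- the wedge conditions being uniform in $\zeta_0$ by compactness and continuity of $\beta$ --- $\arg f'$ extends continuously to $\overline{\mathbf{U}}$ with boundary function $\beta(\varphi)-\varphi-\frac{\pi}{2}$, i.e.\ (\ref{lineq}) holds. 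Conversely, if $\arg f'$ extends continuously to $\mathbf{T}$ with boundary function $u$, then $\arg f'$ is bounded harmonic, so $\arg f'=\mathcal{P}[u]$, $\log f'=c-\widetilde{\mathcal{P}[u]}+i\,\mathcal{P}[u]$ for a real constant $c$, and $\log|f'(z)|=c-\widetilde{\mathcal{P}[u]}(z)$. Since $u$ is continuous, Zygmund's estimate (applied after splitting $u$ into a trigonometric polynomial plus a uniformly small remainder) gives $e^{\lambda|\widetilde u|}\in L^1(\mathbf{T})$ for every $\lambda>0$, whence $\widetilde u\in L^q$ for all finite $q$, $\widetilde{\mathcal{P}[u]}=\mathcal{P}[\widetilde u]$, and $f'$ is outer with boundary modulus $e^{c-\widetilde u}\in L^q$; thus $f'\in H^q(\mathbf{U})$ for every finite $q$, in particular $f'\in H^1(\mathbf{U})$. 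By the F.\ and M.\ Riesz theorem $\gamma$ is then rectifiable and $f$ is absolutely continuous on $\mathbf{T}$ with $w'(\varphi)=ie^{i\varphi}f'(e^{i\varphi})$ a.e., so $w'(\varphi)/|w'(\varphi)|=e^{i(\varphi+\pi/2+u(\varphi))}$ a.e.; reparametrising $\gamma$ by arc length --- the inverse $\varphi(s)$ of $s(\varphi)=\int_0^{\varphi}|f'(e^{it})|\,dt$ being a continuous, strictly increasing, absolutely continuous bijection --- one gets $\gamma'(s)=e^{i(\varphi(s)+\pi/2+u(\varphi(s)))}$ a.e.; as the right-hand side is continuous, $\gamma(s)=\gamma(0)+\int_0^{s}\gamma'(t)\,dt$ is $C^1$, so $\gamma$ is smooth with tangent angle $\varphi+\frac{\pi}{2}+\arg f'(e^{i\varphi})$ at $f(e^{i\varphi})$, again (\ref{lineq}).

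The hard part, in the forward direction, is transferring the chord-quotient lemma to $f'$ and, above all, upgrading nontangential convergence to a genuine continuous extension of $\arg f'$ on $\overline{\mathbf{U}}$: one must rule out tangential misbehaviour, and this is precisely where the \emph{uniformity} of the smoothness of $\gamma$ enters (for a merely rectifiable Jordan domain $\arg f'$ need not extend continuously, and even for a smooth domain $|f'|$ need not). In the converse, the correspondingly delicate step is Zygmund's exponential estimate, which turns mere boundedness of $\arg f'$ into $f'\in H^1$ (and outer). The other ingredients --- Carath\'eodory's theorem, the F.\ and M.\ Riesz theorem, the Poisson representation of bounded harmonic and of $H^1$ functions, and the arc-length reparametrisation --- are routine.
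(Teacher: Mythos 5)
Your argument is, in outline, the classical Lindel\"of--Pommerenke proof, and it is essentially sound; but it is a genuinely different route from the paper's. The paper never proves this Proposition directly: it quotes it as a classical result (citing \cite{lin,martio}) and, in the Remark following Theorem~\ref{lindel}, observes that it is the conformal special case of the paper's main theorem, via $\partial_\varphi f(z)=izf'(z)$, so that $\arg\bigl(\partial_\varphi f(z)/z\bigr)=\tfrac{\pi}{2}+\arg f'(z)$. The paper's Theorem~\ref{lindel} is in turn proved by completely different machinery: the $A_\infty$/Muckenhoupt property of the boundary weight $|\partial_t f(e^{it})|$ (the Lavrentiev/Jerison--Kenig extension, Theorems~\ref{onemaine} and~\ref{poqa}), which gives $\int_{\mathbf{T}}|\partial_t f|^{-\kappa}\,dt<\infty$, followed by a H\"older-inequality estimate of the Poisson integral after splitting $e^{-i\beta(\varphi)}\partial_t f$ into the parts $A$ and $B$; only the converse direction is elementary, and there your argument and the paper's are close. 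What your approach buys is a self-contained classical proof: the chord-quotient lemma (bounded harmonic $\arg\frac{f(z)-w_0}{z-\zeta_0}$ with boundary values continuous at $\zeta_0$, jumps of $\pi$ cancelling to give $-\pi/2$), the Schwarz--Pick transfer to $f'$, and, in the converse, Zygmund's exponential estimate, $f'\in H^1$, F.~and~M.~Riesz and arc-length reparametrization. What it does not buy is any generalization: it leans on $\log f'$, outer functions and harmonic conjugation, exactly the tools that are unavailable for harmonic quasiconformal mappings and that the paper's weight-theoretic argument is designed to replace.

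One step you should tighten is the upgrade from pointwise nontangential convergence of $\arg f'$ to a continuous extension on $\overline{\mathbf{U}}$. The uniformity you invoke (``the wedge conditions being uniform in $\zeta_0$'') is false if read as: $\gamma\cap D(w_0,\rho)$ lies in a thin double wedge at $w_0$ for some $\rho$ independent of $w_0$ --- a $C^1$ Jordan curve can have two nearly parallel strands passing arbitrarily close to each other. The correct uniform statement is in terms of arc length (points of $\gamma$ at small \emph{arc-length} distance from $w_0$ lie in the wedge), and it must be combined with the uniform continuity of the boundary correspondence $e^{i\varphi}\mapsto f(e^{i\varphi})$ measured in the arc-length metric of $\gamma$, together with the uniform modulus of continuity of $f$ on $\overline{\mathbf{U}}$, to produce an $\eta>0$ independent of $\zeta_0$ in the chord-quotient lemma; with that, your Stolz-angle estimate becomes uniform, and since every point close to $\mathbf{T}$ lies in a Stolz angle of fixed opening at its radial projection, the continuous extension (equivalently, the identity $\arg f'=\mathcal{P}[\beta(\varphi)-\varphi-\tfrac{\pi}{2}]$) follows. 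As written this step is the one genuine soft spot, but the architecture is standard and repairable, so I would classify your proposal as correct in substance though different in method from the paper.
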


\subsection{Lavrentiev theorem}
Assume that $\gamma$ is a rectifiable Jordan curve and denote by $\sigma$ its arc length. The shorter arc between $z$ and $w$ in $\gamma$ will be denoted by $(z,w)$. We say that $\gamma$ is chord-arc curve or Lavrentiev curve if there is a constant $M>1$ such that for $z,w\in\gamma$ we have $\sigma(z,w)\le M|z-w|$.

 To formulate Jerison-Kenig version of Lavrentiev theorem we say that the measures $\omega$ and $\sigma$ on a Jordan chord-arc curve $\gamma$ are $A_\infty$ equivalent if for any $\varepsilon>0$ there is $\delta>0$ such that for every arc $I\subset \gamma$ and every Borel set $E\subset I$, $\omega(E)/\omega(I)<\delta$ implies $\sigma(E)<\sigma(I)<\varepsilon$ (\cite{kenig2}). Coifmann and Fefferman in \cite{coif} proved that $A_\infty$ is an equivalence relation (See the subsection below for more details in the topic). Assume that $\Phi$ is conformal mapping of the unit disk onto a Jordan domain $\Omega$ such that $\Phi(0)=a\in \Omega$. Then we define the harmonic measure in $\partial\Omega$  w.r.t $a$ as follows $\omega(E)=|f^{-1}(E)|$, for a Borel set $E\subset \partial\Omega$, and $|F|$ is the arc-length in the unit circle $\mathbf{T}$.
\begin{proposition}\cite{kenig2}
If $\Omega$ is chord-arc domain, then the harmonic measure and arc-length measure are $A_\infty$ equivalent in $\partial \Omega$.
\end{proposition}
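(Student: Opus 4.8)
The plan is to transport the statement to the unit circle and recognise it as the classical fact that the derivative of the Riemann map of a chord-arc domain is a Muckenhoupt $A_\infty$ weight. Fix the conformal map $\Phi:\mathbf U\to\Omega$ with $\Phi(0)=a$. Since $\Gamma:=\partial\Omega$ is rectifiable and $\Omega$ is chord-arc --- hence a Smirnov domain --- standard Hardy space theory gives that $\Phi'\in H^1$ is an outer function, that $\Phi$ extends to an absolutely continuous homeomorphism of $\overline{\mathbf U}$ onto $\overline\Omega$, that $\log|\Phi'|\in L^1(\mathbf T)$, and that $\sigma(\Phi(I))=\int_I|\Phi'(e^{i\theta})|\,d\theta$ for every arc $I\subset\mathbf T$, whereas $\omega(\Phi(I))=|I|$ by the very definition of harmonic measure. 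Transporting $\omega$ and $\sigma$ to $\mathbf T$ along the arc-to-arc homeomorphism $\Phi|_{\mathbf T}$ turns them into $d\theta$ and $|\Phi'(e^{i\theta})|\,d\theta$; since $\sigma$ is Ahlfors $1$-regular on $\Gamma$ (being chord-arc) and $A_\infty$-equivalence of measures is a symmetric relation (Coifman--Fefferman, \cite{coif}), the Proposition becomes \emph{equivalent} to the assertion that $w:=|\Phi'|$ is an $A_\infty$ weight on $\mathbf T$.

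To establish $w\in A_\infty$ I would combine two ingredients. The first is a geometric comparison: for every arc $I\subset\mathbf T$, letting $z_I\in\mathbf U$ denote the corkscrew point on the radius through the centre of $I$ at distance comparable to $|I|$ from $\mathbf T$, one has
\[
\dashint_I|\Phi'(e^{i\theta})|\,d\theta\;\asymp\;|\Phi'(z_I)| ,
\]
with constants depending only on the chord-arc constant $M$. Indeed $\int_I|\Phi'|=\sigma(\Phi(I))\asymp\diam\Phi(I)$ --- the lower bound because arc length dominates the diameter, the upper bound because $\Gamma$ is Ahlfors regular --- while Koebe's distortion theorem together with the interior corkscrew (John) property of a chord-arc domain give $\diam\Phi(I)\asymp\dist(\Phi(z_I),\Gamma)\asymp(1-|z_I|)\,|\Phi'(z_I)|\asymp|I|\,|\Phi'(z_I)|$; combining and dividing by $|I|$ yields the comparison.

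The second, and decisive, ingredient is that $u:=\log|\Phi'|$ belongs to $\mathrm{BMO}(\mathbf T)$ with norm bounded in terms of $M$ alone. Since $\Phi'$ is outer this is equivalent to $\log\Phi'\in\mathrm{BMOA}(\mathbf U)$, i.e.\ to the Carleson-measure estimate
\[
\sup_{I\subset\mathbf T}\frac{1}{|I|}\int_{Q_I}\Bigl|\frac{\Phi''(z)}{\Phi'(z)}\Bigr|^{2}(1-|z|)\,dx\,dy<\infty ,
\]
where $Q_I$ is the Carleson box over $I$. Granting this, the membership $u\in\mathrm{BMO}$ and the harmonicity of $u$ give, by comparing the Poisson kernel $P(z_I,\cdot)$ with $|I|^{-1}\mathbf 1_I$ and telescoping the mean oscillation over the dyadic dilates of $I$, the bound $\bigl|\,u(z_I)-\dashint_I u\,\bigr|\le C\,\|u\|_{\mathrm{BMO}}$. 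Combined with the geometric comparison this yields $\exp\!\bigl(\dashint_I\log|\Phi'|\bigr)\asymp|\Phi'(z_I)|\asymp\dashint_I|\Phi'|$ for every arc $I$, so that by Hru\v{s}\v{c}ev's description of the Muckenhoupt class --- $v\in A_\infty$ if and only if $\sup_I\bigl(\dashint_I v\bigr)\exp\!\bigl(-\dashint_I\log v\bigr)<\infty$ --- we obtain $w=|\Phi'|\in A_\infty(\mathbf T)$, which completes the proof.

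The main obstacle is clearly the $\mathrm{BMOA}$ membership of $\log\Phi'$, i.e.\ the passage from the purely metric chord-arc hypothesis to the Carleson condition above; this is Pommerenke's analytic characterisation of Lavrentiev curves. I would prove it by bounding $\int_{Q_I}|\Phi''/\Phi'|^{2}(1-|z|)\,dx\,dy$ in terms of an integral-geometric ($\beta$-number type) quantity measuring the total turning of $\Gamma$ over the window $\Phi(I)$, which the chord-arc bound forces to be $O(|I|)$. Alternatively one may simply quote Pommerenke's theorem --- or invoke Jerison--Kenig \cite{kenig2} directly --- and keep only the first and last paragraphs above, which record how the $A_\infty$-equivalence of $\omega$ and $\sigma$ is read off from the weight statement.
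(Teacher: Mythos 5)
You should first note that the paper itself offers no proof of this Proposition: it is quoted verbatim from Jerison--Kenig \cite{kenig2}, and the closest the paper comes to proving it is the argument for its generalization, Theorem~\ref{onemaine}, which runs along completely different lines --- a global Jensen-type estimate (Lemma~\ref{apo}) localized to the Carleson boxes $R_I$ by means of the distortion theorem (Proposition~\ref{poli}) for a global quasiconformal extension, i.e.\ essentially Lavrentiev's original localization scheme. Your route is the other standard one: transport both measures to $\mathbf{T}$, observe (correctly, given the paper's definition of $\omega$ and the symmetry of $A_\infty$ from \cite{coif}) that the statement is equivalent to $|\Phi'|\in A_\infty(\mathbf{T})$, and then deduce this from (a) the corkscrew comparison $\dashint_I|\Phi'|\asymp|\Phi'(z_I)|$ and (b) $\log\Phi'\in\mathbf{BMOA}$ with norm controlled by the chord-arc constant, via the reverse-Jensen characterization of $A_\infty$ (condition (i) of the paper). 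That skeleton is logically sound, and your use of BMO to pass from $u(z_I)$ to $\dashint_I u$ is the right way to avoid the sign problem in Jensen's inequality.

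The genuine gap is ingredient (b). The implication ``$\gamma$ chord-arc $\Rightarrow\log\Phi'\in\mathbf{BMOA}$'' is the Lavrentiev--Pommerenke--Jerison--Kenig theorem itself; it is at least as deep as the Proposition you are proving, and you do not prove it. The proposed Carleson-measure estimate for $|\Phi''/\Phi'|^2(1-|z|)\,dx\,dy$ via ``$\beta$-number type'' quantities is only gestured at: you would need both a traveling-salesman-type packing bound for chord-arc curves and a nontrivial estimate converting geometric flatness of $\Gamma$ over $\Phi(I)$ into the analytic Carleson bound, and neither is supplied. The fallback of ``invoking Jerison--Kenig directly'' is circular, since \cite{kenig2} is exactly the source of the statement; quoting Pommerenke's theorem (\cite{boundary}) is legitimate but then your argument is a reduction of one known theorem to another of comparable depth, which is no more informative than the paper's bare citation. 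Two smaller points: the Smirnov property of chord-arc domains, which you need so that $\Phi'$ is outer and $\log|\Phi'(z_I)|$ is the Poisson extension of its boundary values, belongs to the same circle of results and must also be cited rather than treated as ``standard Hardy space theory''; and the comparisons $\diam\Phi(I)\asymp\dist(\Phi(z_I),\Gamma)\asymp(1-|z_I|)|\Phi'(z_I)|$ genuinely require the quasidisk/John geometry of chord-arc domains, so the constants' dependence on $M$ should be tracked. If you want a self-contained proof, the paper's own scheme for Theorem~\ref{onemaine} --- the Jensen estimate of Lemma~\ref{apo} plus localization by the distortion theorem --- specializes to conformal maps and avoids the $\mathbf{BMOA}$ machinery entirely.
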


\subsection{Muckenhoupt weights}
 Consider the measurable function $\psi$ on $\mathbf{R}^n$ and its associated maximal function $M(\psi)$ defined as
$$ M(\psi)(x) = \sup_{r>0} \frac{1}{r^n} \int_{B_r} |\psi|,$$
where $B_r$ is a ball in $\mathbf{R}^n$ with radius $r$ and centre $x$. One of important problems of harmonic analysis is to characterize the functions $\omega \colon \mathbf{R}^n \to [0,\infty)$ for which we have a bound

$$\int |M(\psi)(x)|^p \, \omega(x) dx \leq C \int |\psi|^p \, \omega(x)\, dx,$$
where $C$ depends only on $p \in [1,\infty)$ and $\omega$.

The class of "Muckenhoupt weights" $A_p$, $p>1$ consists of those weights $\omega$ for which the Hardy--Littlewood maximal operator is bounded on $L^p(d\omega)$.  For a fixed $1 < p < \infty$, we say that a weight $\omega \colon \mathbf{R}^n \to [0,\infty)$ belongs to the class $\mathbf{A}_p$  if $\omega$ is locally integrable and there is a constant $c$ such that, for all balls $B$ in $\mathbf{R}^n$, we have

\begin{equation}\label{pq}\left(\frac{1}{|B|} \int_B \omega(x) \, dx \right)\left(  \frac{1}{|B|} \int_B \omega(x)^\frac{-q}{p} \, dx \right)^\frac{p}{q} \leq c < \infty,\end{equation}
where $1/p + 1/q = 1$ and $|B|$ is the Lebesgue measure of of $B$.

The  fundamental result in the study of Muckenhoupt weights \cite{stein, muc}, states that $\mathbf{A}_p=A_p$ for all $p>1$.

 Further, the weight $\omega$ satisfies the Feffereman-Coifman  $A_\infty$ condition in the unit circle $\mathbf{T}$ if one of the following two equivalent conditions hold:

(i) For all arcs $I\subset\mathbf{T}$ there holds the inequality $$\frac{1}{|I|}\int_I \omega(t)dt\le M_1\exp\left(\frac{1}{|I|}\int_I\log w(t) dt\right).$$

(ii) For some $0<\varepsilon<1$ there exists $\delta<1$ such that for all arcs $I\subset{T}$ and measurable sets $E\subset I$ we have
\begin{equation}\label{ii}\frac{\int_E \omega(t) dt}{\int_I \omega(t) dt}<\delta\Rightarrow \frac{|E|}{|I|}<\delta.\end{equation}

It is well known that if $\omega $ satisfies $A_\infty$ condition then it satisfies $A_p$ condition for some $p>1$ (\cite{coif}). Thus $$A_\infty=\bigcup_{p<\infty} A_p.$$ Further, $\omega $ satisfies $A_\infty$ condition if and only if it (\cite[p.~170]{garnet}) satisfies
Gehring  $B_q$ condition for some $q>1$: for all arcs $I\subset\mathbf{T}$ there holds the inequality $$\left(\frac{1}{|I|}\int_I \omega(t)^qdt\right)^{1/q}\le \frac{M_2}{|I|}\int_I w(t) dt.$$

\subsection{New results}
The aim of this paper is to prove the following extension of Lindel\"of theorem

\begin{theorem}\label{lindel}
If $f(z)=\mathcal{P}[f^*](z)$ is a quasiconformal harmonic mapping of the unit disk onto a Jordan domain bounded by a curve  $\gamma$, then the function $$U(z):=\arg\left(\frac{1}{z}\frac{\partial}{\partial \varphi}f(z)\right)$$ is well defined and smooth in $\mathbf{U}^*:=\mathbf{U}\setminus\{0\}$ and has a continuous extension to $\mathbf{T}$ if and only if $\gamma\in C^1$. Furthermore, there holds $$U(e^{i\varphi})=\beta(\varphi)-\varphi,$$  where  $\beta(\varphi)$ is the tangent angle of $\gamma$ at $f^*(e^{i\varphi})$.
\end{theorem}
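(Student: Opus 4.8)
The plan is to prove the statement in three stages: well-definedness and smoothness of $U$ on $\mathbf U^*$; a reduction of the boundary statement to a regularity statement for $f^*$; and that regularity statement, which is where Lavrentiev's theorem and Muckenhoupt weights enter.

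\emph{Stage 1.} Write $z=re^{i\varphi}$ and $f=a+\overline b$ with $a,b$ holomorphic. From $\partial z/\partial\varphi=iz$ and $\partial\bar z/\partial\varphi=\overline{iz}=-i\bar z$ one gets
$$\frac{\partial}{\partial\varphi}f(z)=izf_z(z)-i\bar zf_{\bar z}(z)=Df(z)[iz]=i\bigl(za'(z)-\overline{zb'(z)}\bigr),$$
so $\partial_\varphi f$ is harmonic on $\mathbf U$ (a holomorphic plus an antiholomorphic function). By Lewy's theorem $f$ is a diffeomorphism, hence $Df(z)$ is invertible and $\partial_\varphi f(z)\neq0$ for $z\neq0$, so $z^{-1}\partial_\varphi f$ is smooth and non-vanishing on $\mathbf U^*$. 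For each fixed $r$, $\varphi\mapsto f(re^{i\varphi})$ is a positively oriented smooth Jordan curve, so its tangent $\partial_\varphi f$ has winding number $1$ about the origin, while $z^{-1}$ has winding number $-1$; the product has winding number $0$, so $U=\arg(z^{-1}\partial_\varphi f)$ has a single-valued smooth branch on $\mathbf U^*$.

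\emph{Stage 2.} Differentiating the Poisson integral and integrating by parts, whenever $f^*$ is absolutely continuous on $\mathbf T$ one has $\partial_\varphi f=\mathcal P[\partial_\varphi f^*]$ with $\partial_\varphi f^*(e^{i\varphi})=\frac{d}{d\varphi}f^*(e^{i\varphi})$. Hence if $\partial_\varphi f^*$ is continuous and non-vanishing, $\mathcal P[\partial_\varphi f^*]$ extends continuously to $\overline{\mathbf U}$, so $z^{-1}\partial_\varphi f\to e^{-i\varphi}\partial_\varphi f^*(e^{i\varphi})$; since $\partial_\varphi f^*(e^{i\varphi})$ is a non-zero vector tangent to $\gamma$ at $f^*(e^{i\varphi})$, this gives $U(e^{i\varphi})=\arg\bigl(e^{-i\varphi}\partial_\varphi f^*(e^{i\varphi})\bigr)=\beta(\varphi)-\varphi$. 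Conversely, if $U$ extends continuously, then $z^{-1}\partial_\varphi f$ does too, without zeros; using $|f_{\bar z}|\leq k|f_z|$ (which forces $\partial_\varphi f=i(za'-\overline{zb'})$ to differ in direction by less than $\arcsin k$ from the non-vanishing holomorphic $iza'$, where $a'\neq0$ because $\det Df>0$) together with the modulus-of-continuity estimates for q.c.\ harmonic maps, one obtains a continuous non-vanishing tangent field along the Jordan curve $\gamma=f^*(\mathbf T)$, i.e. $\gamma\in C^1$. So the theorem reduces to
$$\gamma\in C^1\quad\Longleftrightarrow\quad f^*\in C^1(\mathbf T)\ \text{with non-vanishing derivative.}$$

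\emph{Stage 3 (the crux).} For ``$\gamma\in C^1\Rightarrow f^*\in C^1$'', let $\Phi\colon\mathbf U\to\Omega$ be a Riemann map and $g=\Phi^{-1}\circ f$ a q.c.\ self-map of $\mathbf U$ with quasisymmetric boundary map $\psi$, so $f^*=\Phi\circ\psi$ on $\mathbf T$. A $C^1$ Jordan curve is chord-arc, so the Jerison--Kenig form of Lavrentiev's theorem gives that harmonic measure $\omega=d\theta$ and arc length $\sigma=|\Phi'(e^{i\theta})|\,d\theta$ are $A_\infty$-equivalent on $\mathbf T$; in particular $\gamma$ is rectifiable and $|\Phi'|$ is an $A_\infty$, hence (by the Gehring $B_q$ characterization recalled above) reverse-Hölder, weight. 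I would then transport this through the harmonic q.c.\ map: the distortion bound, combined with the chord-arc property and the $A_\infty/B_q$ inequalities applied to both $f$ and $f^{-1}$, should show that the pulled-back arc-length measure $A\mapsto\sigma(f^*(A))$ on $\mathbf T$ is absolutely continuous with density $|(f^*)'|$ bounded away from $0$ and $\infty$; continuity of $|(f^*)'|$ is then upgraded from $\gamma\in C^1$ and the boundary regularity of $f$. Given this, parametrizing $\gamma$ by arc length $w\in C^1$, $|w'|\equiv1$, one has $f^*(e^{i\varphi})=w(s(\varphi))$ with $s\in C^1$, $s'>0$, so $\partial_\varphi f^*=w'(s(\varphi))s'(\varphi)$ is continuous, non-vanishing, with argument $\beta(\varphi)$; Stage 2 then finishes the proof and yields $U(e^{i\varphi})=\beta(\varphi)-\varphi$.

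I expect Stage 3 to be the main obstacle: showing that Lavrentiev's $A_\infty$-equivalence on $\partial\Omega$ survives pullback by the merely quasiconformal (not conformal) harmonic map $f$ and produces a continuous, two-sided bound on $(f^*)'$. The Poisson-integral reduction, the winding-number count, and the use of the classical Lindel\"of theorem through $\Phi$ are comparatively routine.
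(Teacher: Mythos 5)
Your Stage 1 is fine (the paper instead writes $\partial_\varphi f(z)/z=ia'(1-\overline{zb'}/(za'))$ and uses $|\overline{zb'}/(za')|\le k<1$ to take a logarithm, but your winding-number count gives the same conclusion), and your Stage 2 formula $U(e^{i\varphi})=\beta(\varphi)-\varphi$ is the right target. The genuine gap is the reduction you make and the goal of Stage 3: the equivalence ``$\gamma\in C^1\iff f^*\in C^1(\mathbf T)$ with non-vanishing derivative'' is false in the forward direction, already in the conformal subcase. For a conformal map onto a merely $C^1$ Jordan domain, $\arg f'$ extends continuously (Lindel\"of) but $|f'|$ need not be bounded, bounded away from zero, or continuous (one needs Dini-smoothness for that); only $\log f'\in VMOA$-type statements hold. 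Consequently your Stage 3 plan --- to deduce from the chord-arc/$A_\infty$ machinery that the boundary density $|(f^*)'|$ is \emph{bounded away from $0$ and $\infty$} and then ``upgrade'' it to continuity --- cannot succeed: $A_\infty$ (equivalently Gehring $B_q$) weights give reverse-H\"older and $\int|\partial_tf^*|^{-\kappa}dt<\infty$ type integrability, never two-sided pointwise bounds, and no continuity of the modulus is available. Your converse also contains an unjustified step: continuity of $U=\arg(z^{-1}\partial_\varphi f)$ up to $\mathbf T$ does not imply that $z^{-1}\partial_\varphi f$ itself extends continuously, precisely because the modulus may fail to converge.

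The paper's proof is designed to avoid exactly this trap. In the direction $\gamma\in C^1\Rightarrow$ continuous extension, it writes $\partial_tf^*(e^{it})=e^{i\beta(t)}\psi'(t)$ with $\beta$ continuous but $\psi'$ only measurable, represents $\partial_\varphi f$ by the Poisson integral of $\partial_tf^*$, and shows only that the \emph{argument} converges: splitting $e^{-i\beta(\varphi)}\partial_\varphi f=B+iA$, it bounds $|A|/|B|$ using the continuity of $\beta$ for the numerator and, for the lower bound on $B$, a H\"older inequality with exponents $\frac{1+\kappa}{\kappa},\,\kappa+1$ that exploits precisely the negative-power integrability $\int|\partial_tf^*|^{-\kappa}dt<\infty$ coming from Theorem~\ref{poqa} (hence from the $A_\infty$ result, Theorem~\ref{onemaine}); no pointwise control of $\psi'$ is used or needed. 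For the converse, instead of claiming continuity of $z^{-1}\partial_\varphi f$, the paper studies the difference quotient $\frac{f(re^{it})-f(re^{is})}{t-s}e^{-iV(re^{is})}$, writes it as an average of $|\partial_\tau f|e^{i(V(re^{i\tau})-V(re^{is}))}$, and uses uniform continuity of $V$ on $\{1/2\le|z|\le1\}$ to conclude that $\gamma$ has a continuously turning tangent with direction $V(e^{is})$; this works even though $|\partial_tf^*|$ may be discontinuous. So the missing idea in your proposal is this ``argument-only'' analysis of the Poisson integral; as written, Stage 3 aims at a statement that is simply not true.
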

\begin{remark}
 In order to deduce the classical Lindel\"of theorem from Theorem~\ref{lindel}, observe that, if $f=\mathcal{P}[F]$ is conformal, which is then certainly quasiconformal and harmonic, for $z=re^{i\varphi}$ we have $$\partial_\varphi f(z)=i zf'(z).$$ We infer that $$\arg(\partial_\varphi f(z))=\frac{\pi}{2}+\varphi+\mathrm{arg}(f'(z)).$$ Thus on the unit circle we have $$\mathrm{arg}(f'(z))=\beta(\varphi)-\frac{\pi}{2}-\varphi,$$ which coincides with \eqref{lineq}.

\end{remark}
To prove Theorem~\ref{lindel}, we prove the following extension of Jerison-Kenig  version (\cite{kenig1}) of Lavrentiev theorem (\cite{lavrentiev}).
\begin{theorem}\label{onemaine}
Assume that $f$ is a $K-$ quasiconformal harmonic mapping of the unit disk onto a domain $\Omega$ bounded by a chord-arc Jordan curve. Then all the following four (equivalent) statements hold for the weight  $w(t)=|\partial_t f(e^{it})|$.
  \begin{enumerate}
   \item  $w(t)$ satisfies $A_\infty$ Coifman-Fefferman condition;
   \item $w(t)$ is a Muckenhoupt weight;
   \item $w(t)$ satisfies Gehring condition;
   \item $w(t)dt$ is $A_\infty$ equivalent to the arc-length measure $dt$.
 \end{enumerate}
 \end{theorem}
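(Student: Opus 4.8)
The plan is as follows. Since the equivalence of the four statements is classical — it is exactly the content of the results recalled in the introduction (Muckenhoupt's theorem $\mathbf{A}_p=A_p$, the identity $A_\infty=\bigcup_p A_p$ with the Coifman--Fefferman characterisation, Gehring's reverse H\"older characterisation of $A_\infty$, and the reformulation of $A_\infty$--equivalence of measures) — it suffices to prove any one of them, and I will aim at statement (4). First I would record a pointwise boundary identity. Writing $f=a+\overline b$ with $a,b$ analytic in $\mathbf U$ and $|b'|\le k|a'|$, $k=(K-1)/(K+1)$, one has $\partial_\varphi f(z)=i\bigl(zf_z(z)-\bar z f_{\bar z}(z)\bigr)$ for $z=re^{i\varphi}$; since $|f_{\bar z}|\le k|f_z|$ and $J_f=|f_z|^2-|f_{\bar z}|^2$ this yields, on $\mathbf T$ and with constants depending only on $K$,
$$w(t)=|\partial_t f^*(e^{it})|\ \asymp\ |f_z(e^{it})|=|a'(e^{it})|\ \asymp\ \sqrt{J_f(e^{it})}.$$
So it is enough to prove that $|a'|$, equivalently $\sqrt{J_f}$, is $A_\infty$--equivalent to $dt$; a preliminary point, for which the rectifiability of $\gamma$ together with the harmonicity of $f$ is used, is that $w$ is really a weight, i.e. $f^*$ is absolutely continuous with $w\in L^1(\mathbf T)$ (equivalently $a'\in H^1$, so that $\partial_t f^*(e^{it})=i\bigl(e^{it}a'(e^{it})-e^{-it}\overline{b'(e^{it})}\bigr)$ a.e.), which I would get from the boundedness of the lengths $\int_0^{2\pi}|\partial_t f(re^{it})|\,dt$ of the curves $f(\{|z|=r\})$.

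Next I would pass to the Riemann map. Let $\Phi\colon\mathbf U\to\Omega$ be conformal with $\Phi(0)=f(0)$ and put $g=\Phi^{-1}\circ f$; then $g$ is a $K$--quasiconformal self-homeomorphism of $\mathbf U$ fixing $0$, with $J_f=|\Phi'(g)|^2J_g$, and $g$ extends to the closed disk with an $M(K)$--quasisymmetric boundary map $h=g|_{\mathbf T}$. Since $\gamma$ is rectifiable, $\Phi^*$ is absolutely continuous with $0<|\Phi'|<\infty$ a.e., and then $h$ is absolutely continuous as well; writing $h(e^{it})=e^{i\theta(t)}$ and $W(s)=|\Phi'(e^{is})|$ we get the factorisation
$$w(t)\,dt=W(\theta(t))\,\theta'(t)\,dt=h^{*}\bigl(W(s)\,ds\bigr)$$
of measures on $\mathbf T$. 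Now I would invoke the Jerison--Kenig form of Lavrentiev's theorem for the chord-arc domain $\Omega$: harmonic measure at $\Phi(0)$ and arc-length on $\gamma$ are $A_\infty$--equivalent, which, transported to $\mathbf T$ by $\Phi$, says precisely that $W(s)\,ds\sim_{A_\infty}ds$. Because $A_\infty$--equivalence of measures is preserved under homeomorphic pull-back, $w\,dt=h^{*}(W\,ds)\sim_{A_\infty}h^{*}(ds)=\theta'\,dt$, so statement (4) reduces to proving $\theta'\in A_\infty(\mathbf T)$.

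The hard part is exactly this last step: showing that the derivative of the boundary correspondence of $g=\Phi^{-1}\circ f$ is an $A_\infty$ weight. It cannot follow from quasiconformality of $g$ alone, since a generic quasiconformal self-map of $\mathbf U$ can have a singular boundary map and hence a non-$A_\infty$ (indeed non-integrable) boundary derivative; so the harmonicity of $f$ must be used decisively, beyond its role above. Two routes seem possible. One is to prove in general that an absolutely continuous quasisymmetric self-homeomorphism of $\mathbf T$ has $A_\infty$ derivative, with constant governed by the quasisymmetry constant — the quasisymmetry inequality for $h$ forbids the concentration of $\theta'\,dt$ that would break the Coifman--Fefferman condition, and, applied to $h^{-1}$ (also quasisymmetric, since $g^{-1}$ is quasiconformal), forbids too rapid growth. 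The other is to use the quasilinear elliptic equation $g_{z\bar z}=-(\Phi''/\Phi')(g)\,g_z\,g_{\bar z}$ satisfied by $g$ (with coefficient governed by $\log|\Phi'|\in\mathrm{BMO}$, a consequence of the chord-arc hypothesis) together with the Gehring reverse H\"older inequality for Jacobians of quasiconformal mappings, propagated from interior balls to the circle $\mathbf T$ by reflection. Either way, once $\theta'\in A_\infty(\mathbf T)$ is established, the previous step gives (4), and then (1), (2), (3) follow from the classical equivalences. I expect the real obstacles to be the absolute continuity of $f^*$ and the $A_\infty$ bound for $\theta'$, both of which require combining the chord-arc condition on $\gamma$ with the harmonicity of $f$.
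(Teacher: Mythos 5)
The soft parts of your outline are fine: the equivalence of (1)--(4) is indeed classical, $w(t)\asymp|a'(e^{it})|$ is correct, the absolute continuity of $f^*$ is available (Proposition~\ref{propi}, quoted in the paper from Pavlovi\'c and Kalaj--Markovi\'c--Mateljevi\'c), and pulling back the Jerison--Kenig equivalence $W\,ds\sim_{A_\infty}ds$ by the boundary homeomorphism $h$ does legitimately reduce statement (4) to showing $\theta'\in A_\infty(\mathbf{T})$ for the boundary correspondence of $g=\Phi^{-1}\circ f$. But that last statement is, modulo Jerison--Kenig, equivalent to the theorem itself, and you do not prove it. Moreover, the first route you propose rests on a false principle: an absolutely continuous quasisymmetric self-homeomorphism of $\mathbf{T}$ need \emph{not} have an $A_\infty$ derivative. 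Quasisymmetry of $h$ only makes $\theta'\,dt$ a doubling measure, and there are classical examples (Fefferman--Muckenhoupt) of absolutely continuous doubling weights that are not $A_\infty$; by the Beurling--Ahlfors construction every doubling measure arises as the boundary derivative of a quasisymmetric map, so the quasisymmetry inequality, even applied to both $h$ and $h^{-1}$, cannot by itself ``forbid the concentration'' you need. Your second route is only a heuristic: $g$ is not harmonic (post-composition with $\Phi^{-1}$ destroys harmonicity), the coefficient $\Phi''/\Phi'$ is merely controlled in a BMO sense, and no mechanism is offered for converting interior reverse H\"older estimates for Jacobians into an $A_\infty$ bound for the boundary derivative. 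So the decisive quantitative use of harmonicity, which you yourself identify as the crux, is missing.

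For contrast, the paper never passes to $\Phi^{-1}\circ f$ and proves the Coifman--Fefferman condition for $w$ directly. Harmonicity enters through the Heinz--Hall inequality, which gives the lower bound of Lemma~\ref{poki} for $|a'(0)|$ in terms of $\mathrm{dist}(f(0),\partial\Omega)$; combined with Jensen's inequality this yields the global estimate of Lemma~\ref{apo}, $\omega_f(E)\le 2|\partial\Omega|/|\log\sigma(E)|$, for q.c.\ harmonic maps onto rectifiable domains containing a disk of the definite radius $C_K$. The localization to an arc $I$ is then done by rescaling $f\circ\exp$ on the box $[-|I|,0]\times[0,|I|]$, using the distortion theorem (Proposition~\ref{poli}) for the global q.c.\ extension, the gradient estimate of Lemma~\ref{leqa} and the chord-arc property of images of radial segments (Lemma~\ref{tomi}), so that the rescaled map is again a q.c.\ \emph{harmonic} map onto a uniformly chord-arc domain containing $D(0,C_K)$ (rescaling and precomposition with conformal maps preserve harmonicity), and Corollary~\ref{apo1} applies. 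If you wish to salvage your Riemann-map reduction, you would still need an argument of this quantitative type for $\theta'$; absolute continuity plus quasisymmetry will not suffice.
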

The $\textbf{BMO}$ space of functions on the circle $\mathbf{T}$ is defined as follows.
Let $\psi\in L^1(\mathbf{T} )$. We say $\psi\in \textbf{BMO}(\textbf{T} )$ if
$$\sup_I\frac{1}{|I |}\int_{I}|\psi(t)-\psi_I| dt=\|\psi\|_\ast<\infty $$
where $I$ denotes any arc on $\mathbf{T}$, $|I|$ is the length of $I$, and $$\psi_I =\frac{1}{|I|}\int_I \psi(x) {dt}$$
is the average of $\psi$ over $I$. The space of analytic functions $f\in H^1(\mathbf{U})$, with boundary function $f\in \mathbf{BMO}(\mathbf{T})$ is denoted by $\mathbf{BMOA}$. Then by using Theorem~\ref{onemaine} and  \cite[Lemma~5]{hun} and \cite[p.~171]{boundary} we immediately  have the following result
\begin{theorem}\label{poqa}
Assume that $f$ is a $K-$ quasiconformal harmonic mapping of the unit disk onto a domain $\Omega$ bounded by a chord-arc Jordan curve.
Then
\begin{itemize}
  \item there is a positive number $\kappa>0$ such that $$\int_{\mathbf{T}}|\partial_t f(e^{it})|^{-\kappa} dt<\infty,$$
  \item there is a positive number $\lambda>1$ such that $$\int_{\mathbf{T}}|\partial_t f(e^{it})|^{\lambda} dt<\infty,$$
  \item  $\log |\partial_t f(e^{it})|\in \mathbf{BMO}(\mathbf{T})$, and
  \item $\log f_z\in \mathbf{BMOA}$.
\end{itemize}
\end{theorem}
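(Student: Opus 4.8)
The plan is to obtain the first three assertions directly from Theorem~\ref{onemaine} and the classical theory of $A_\infty$ weights, and the fourth from the same together with \cite[Lemma~5]{hun} and \cite[p.~171]{boundary}; the one point that requires genuine work is showing that $f_z$ is an outer function.

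First I would dispose of the three statements living on $\mathbf{T}$. Set $w(t)=|\partial_t f(e^{it})|$. By Theorem~\ref{onemaine}, $w$ is a Muckenhoupt weight and satisfies the Gehring $B_q$ condition for some $q>1$ and an $A_p$ condition for some $p>1$; moreover $w\in L^1(\mathbf{T})$, since $\int_{\mathbf{T}}w\,dt$ is bounded by the length of $\gamma$, which is finite because chord-arc curves are rectifiable. Evaluating the Gehring inequality on the arc $I=\mathbf{T}$ gives $\int_{\mathbf{T}}w^q\,dt\le M_2^q|\mathbf{T}|^{1-q}\bigl(\int_{\mathbf{T}}w\,dt\bigr)^q<\infty$, which is the second bullet with $\lambda=q$. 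Evaluating the $A_p$ inequality (the circle analogue of \eqref{pq}) on $I=\mathbf{T}$ gives $\int_{\mathbf{T}}w^{-q/p}\,dt<\infty$, where $1/p+1/q=1$, which is the first bullet with $\kappa=q/p>0$. Finally, it is classical (see e.g.\ \cite{garnet}) that the logarithm of an $A_\infty$ weight lies in $\mathbf{BMO}$, so $\log|\partial_t f(e^{it})|=\log w\in\mathbf{BMO}(\mathbf{T})$, which is the third bullet.

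To reach the fourth bullet I would first compare $w$ with the boundary modulus of $f_z=a'$. On $\mathbf{T}$, with $z=e^{i\varphi}$ and $\mu=f_{\bar z}/f_z$,
\begin{equation*}
\partial_\varphi f=iz\,f_z-i\bar z\,f_{\bar z}=iz\,f_z\bigl(1-z^{-2}\mu\bigr)\qquad\text{on }\mathbf{T},
\end{equation*}
and since $|\mu|\le k<1$ we get $|1-z^{-2}\mu|\in[1-k,1+k]$, hence $(1-k)|f_z|\le w\le(1+k)|f_z|$ a.e.\ on $\mathbf{T}$; in particular $\log|f_z(e^{it})|=\log w(t)+O(1)\in\mathbf{BMO}(\mathbf{T})$. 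By Lewy's theorem $f_z$ is non-vanishing in $\mathbf{U}$, so $g:=\log f_z$ is a well-defined single-valued analytic function with $\re g=\log|f_z|$. I would also record that $f_z$ lies in a small Hardy class: quasiconformality gives $|f_z|^2\le(1-k^2)^{-1}J_f$, hence $\int_{\mathbf{U}}|f_z|^2\,dx\,dy\le(1-k^2)^{-1}|\Omega|<\infty$; and since $f^*$ is absolutely continuous on $\mathbf{T}$ one has $\partial_\varphi f=\mathcal P[\partial_t f^*]\in h^1(\mathbf{U})$, so its analytic component $izf_z$, and therefore $f_z$ itself, belongs to $\bigcap_{p<1}H^p(\mathbf{U})\subset N^+$.

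It remains to upgrade ``$g=\log f_z$ analytic, $\re g\in\mathbf{BMO}(\mathbf{T})$, $f_z\in N^+$'' to $g\in\mathbf{BMOA}$. The mechanism is the classical one behind $\log\Phi'\in\mathbf{BMOA}$ for a conformal map $\Phi$ onto a chord-arc domain: if $f_z$ is an \emph{outer} function --- equivalently, if $\log|f_z|$ is the Poisson extension of its boundary function --- then, the conjugation operator being bounded on $\mathbf{BMO}$, the imaginary part of $g$ (normalised by $g(0)$) also lies in $\mathbf{BMO}$, whence $g\in\mathbf{BMOA}$, and in particular $g=\log f_z\in H^1$. This is precisely the content of \cite[Lemma~5]{hun} together with \cite[p.~171]{boundary}. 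I expect the main obstacle to be exactly the hypothesis to be checked there, namely that $f_z$ has no singular inner factor: neither $f_z\in N^+$ nor the two-sided boundary integrability $|f_z(e^{i\cdot})|^{\pm\varepsilon}\in L^1(\mathbf{T})$ established above is by itself sufficient, and one must genuinely use the quasiconformal structure of $f$ --- not merely the $A_\infty$ conclusion of Theorem~\ref{onemaine} --- to rule such a factor out; this is the step for which the cited lemmas do the essential work.
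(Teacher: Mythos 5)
Your argument is correct and coincides with the paper's own proof: the first two bullets come from evaluating the Gehring and $A_p$ inequalities of Theorem~\ref{onemaine} on the full circle (with $w\in L^1(\mathbf{T})$ supplied by rectifiability and Proposition~\ref{propi}), the third from the standard equivalence between $w\in A_\infty$ and $\log w\in\mathbf{BMO}$, and the fourth from the pointwise comparison $|f_z|\asymp|\partial_t f|$ on $\mathbf{T}$ together with the boundedness of the conjugation operator on $\mathbf{BMO}$ via \cite[Lemma~5]{hun} and \cite[p.~171]{boundary}. The singular-inner-factor point you isolate --- that $f_z$ must be outer before $\arg f_z^*$ can be identified with the conjugate of $\log|f_z^*|$ --- is a genuine issue, but the paper's proof handles it exactly as you do, by delegating it to the same citations (together with the Smirnov-type result of \cite{kmm}), so your write-up is, if anything, the more explicit of the two about where the remaining work lies.
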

\begin{remark}
It follows from the previous corollary that $\arg f_z(e^{it})\in \mathbf{BMO}(\mathbf{T})$. Thus
$$\arg f_z(z)=\int_{\mathbf{T}}\frac{1-|z|^2}{|z-e^{it}|^2} \arg f_z(e^{it})\frac{dt}{2\pi}.$$
We want to note that the problem of defining  $\arg f_z$ for general q.c. mappings is a subtle problem. For an approach to the solution of this problem we refer to a recent paper \cite{aips}, where among the other results it has been proved the following sharp result: $e^{b\arg f_z}\in L^1_{loc}(D,\mathbf{C})$ for $0\le b< \frac{4K}{K^2-1}$.
\end{remark}
\begin{proof}[Proof of Theorem~\ref{poqa}]
First two items are immediate consequences of Theorem~\ref{onemaine} and the following proposition.
 \begin{proposition}\cite{MP, kmm}\label{propi}
If $f=a+\overline{b}$ is a q.c. mapping of the unit disk onto a Jordan domain with rectifiable boundary, then $g$ and $h$ have absolutely continuous extension to $\mathbf{T}$.
\end{proposition}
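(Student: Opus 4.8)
The plan is to reduce the statement to the claim that the analytic parts $a$ and $b$ of $f=a+\overline b$ satisfy $a'\in H^{1}(\mathbf{U})$ and $b'\in H^{1}(\mathbf{U})$. Granting this, the classical fact that an analytic function whose derivative belongs to $H^{1}$ extends continuously to $\overline{\mathbf{U}}$ with absolutely continuous boundary values --- its boundary trace being an arc-length primitive of the $L^{1}$ boundary function of the derivative --- shows that $t\mapsto a(e^{it})$ and $t\mapsto b(e^{it})$ are absolutely continuous, which is what we want. As a quasiconformal homeomorphism of $\mathbf{U}$ onto the Jordan domain $\Omega$, $f$ extends to a homeomorphism of $\overline{\mathbf{U}}$ onto $\overline\Omega$ (reduce to a self-map of $\mathbf{U}$ by the Riemann map and invoke the boundary extension of q.c.\ self-maps), so $f$ is bounded and hence $f=\mathcal{P}[f^{*}]$ for the continuous boundary function $f^{*}\colon\mathbf{T}\to\gamma:=\partial\Omega$. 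If $\gamma$ is rectifiable of length $L$, then for every partition $0=t_{0}<\dots<t_{N}=2\pi$ the points $f^{*}(e^{it_{j}})$ lie on $\gamma$ in cyclic order, so that $\sum_{j}|f^{*}(e^{it_{j+1}})-f^{*}(e^{it_{j}})|\le L$; thus $f^{*}$ is of bounded variation on $\mathbf{T}$ and $df^{*}$ is a complex Borel measure with $|df^{*}|(\mathbf{T})\le L$.

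The key step is a uniform bound on the lengths $\ell(\gamma_{r})$ of the image curves $\gamma_{r}=f(\{|z|=r\})$, $0<r<1$. Integrating by parts in the Poisson integral gives $\partial_{t}f(re^{it})=\int_{0}^{2\pi}P(r,t-s)\,df^{*}(e^{is})$, so by Fubini's theorem together with $\int_{0}^{2\pi}P(r,\tau)\,d\tau=1$ one obtains
$$\ell(\gamma_{r})=\int_{0}^{2\pi}\bigl|\partial_{t}f(re^{it})\bigr|\,dt\le\int_{0}^{2\pi}\!\!\int_{0}^{2\pi}P(r,t-s)\,d\bigl|df^{*}\bigr|(e^{is})\,dt=\bigl|df^{*}\bigr|(\mathbf{T})\le L .$$
On the other hand, with $z=re^{it}$ and $f=a+\overline b$ we have $\partial_{t}f(z)=iza'(z)-i\bar z\,\overline{b'(z)}$, hence $|\partial_{t}f(z)|\ge r\bigl(|a'(z)|-|b'(z)|\bigr)$; and since $f$ is $K$-quasiconformal, $|b'(z)|=|f_{\bar z}(z)|\le k\,|f_{z}(z)|=k\,|a'(z)|$ with $k=\tfrac{K-1}{K+1}<1$, so $|\partial_{t}f(z)|\ge r(1-k)|a'(z)|$. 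Combining the two estimates, $\int_{0}^{2\pi}|a'(re^{it})|\,dt\le L/(r(1-k))$ for all $r\in(0,1)$, whence $a'\in H^{1}(\mathbf{U})$; and then $\int_{0}^{2\pi}|b'(re^{it})|\,dt\le k\int_{0}^{2\pi}|a'(re^{it})|\,dt$ forces $b'\in H^{1}(\mathbf{U})$ as well. This completes the reduction.

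The only delicate point is the length bound $\sup_{0<r<1}\ell(\gamma_{r})\le L$, which I would extract cheaply from the Poisson representation of $\partial_{t}f$ and the bounded variation of $f^{*}$, rather than through any geometric comparison of $\gamma_{r}$ with $\gamma$. Equally important is that quasiconformality is used precisely to decouple $|a'|$ from $|b'|$ in the length integrand: for a merely harmonic $f$ with rectifiable image boundary the traces of $a$ and $b$ need not be absolutely continuous, so the q.c.\ hypothesis is genuinely exploited here.
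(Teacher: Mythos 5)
Your argument is correct. The paper itself offers no proof of Proposition~\ref{propi} (it is quoted from \cite{MP,kmm}, and the ``$g$ and $h$'' in its statement are a typo for the analytic parts $a$ and $b$, as you rightly read it), but your route --- the cyclic-order bound showing $f^*$ has variation at most $L$, the Poisson representation $\partial_t f(re^{it})=\int P(r,t-s)\,df^*(e^{is})$ giving $\sup_{r<1}\ell(\gamma_r)\le L$ by Fubini, the quasiconformal decoupling $|b'|\le k|a'|$ turning this into $a',b'\in H^1(\mathbf{U})$, and the classical theorem that $H^1$ derivative implies absolutely continuous boundary values --- is precisely the standard argument in the cited references, and each step checks out.
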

Observe further that a real function $g\in \mathbf{BMO}(\mathbf{T})$ if and only if $\tilde g \in \mathbf{BMO}(\mathbf{T})$, where $\tilde g$ is the Hilbert transform of $g$. Further, both of the previous conditions are equivalent to the fact that $e^{qg(t)}\in A_\infty$ for some $q>0$. Since $|\partial_t f(e^{it})|\in A_\infty$  it follows that  $\log[|\partial_t f(e^{it})|]\in \mathbf{BMO}(\mathbf{T}).$ Since $$\frac{1}{K}|\partial_t f(e^{it})|\le |f_z(e^{it})|\le {K}|\partial_t f(e^{it})|,$$ we infer that $|f_z(e^{it})|\in A_\infty$ and thus
$$\log |f_z(e^{it})|\in \mathbf{BMO}(\mathbf{T}).$$ It follows that $\arg f_z\in \mathbf{BMO}(\mathbf{T})$ and hence, $\log f_z\in \mathbf{BMOA}$.
\end{proof}
 In the second section we will prove some auxiliary results for quasiconformal harmonic mappings that are analogous to related results for harmonic measure and conformal mappings. The main results are proved in the third section. In the proof of Theorem~\ref{onemaine} we follow some ideas from the fundamental paper by Jerrison and Kenig \cite{kenig2}. We will only prove the first statement of Theorem~\ref{onemaine} (see \eqref{ii}), and this suffices. The core of the proof is Lemma~\ref{apo}, where the arc $I$ is replaced by $\mathbf{T}$. Then we will reduce the whole proof of the main result to Lemma~\ref{apo} by using the Distortion theorem for global quasiconformal mappings, and some known auxiliary results for the class of quasiconformal harmonic mappings. One of difficulties that appear in this new setting is the fact that the logarithm of a harmonic mapping is not a harmonic mapping, in general. The proof of Lindel\"of theorem  for q.c. harmonic mappings (Theorem~\ref{lindel}) depends on Theorem~\ref{onemaine}, but it involves a subtle analysis of Poisson integral formula. This proof is completely different from the known  proofs of Lindel\"of theorem for conformal mappings \cite{martio,lin}.

\section{Auxiliary results}
\begin{lemma}\label{poki}
Let $f(z)=a+\overline{b}$, $b(0)=0$, be a $K-$quasiconformal mapping of the unit disk onto the domain $\Omega$. Then $$|f_z(0)|\ge \frac{3\sqrt{3}(1+K)}{2\sqrt{2}\pi\sqrt{1+K^2}}\mathrm{dist}(f(0),\partial \Omega).$$
\end{lemma}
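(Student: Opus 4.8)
The bound is unchanged by a translation of the target, so assume $f(0)=0$ and set $d=\dist(0,\partial\Omega)$, so that $\{|w|<d\}\subset\Omega$. Write $f=a+\overline b$ with $a,b$ holomorphic and $b(0)=0$ (as in the statement); then $a(0)=f(0)=0$, $f_z(0)=a'(0)$, and quasiconformality gives $|f_{\overline z}(0)|=|b'(0)|\le k|a'(0)|$ with $k=(K-1)/(K+1)$. Since $1+k^2=2(1+K^2)/(1+K)^2$, one has $\sqrt{1+k^2}\,|a'(0)|\ge\bigl(|a'(0)|^2+|b'(0)|^2\bigr)^{1/2}$, so it suffices to prove the dilatation-free inequality
$$d\ \le\ \frac{2\pi}{3\sqrt3}\,\bigl(|f_z(0)|^2+|f_{\overline z}(0)|^2\bigr)^{1/2};$$
reinstating the factor $\sqrt{1+k^2}$ turns $\tfrac{3\sqrt3}{2\pi}$ into precisely the stated $\tfrac{3\sqrt3(1+K)}{2\sqrt2\,\pi\sqrt{1+K^2}}$. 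By Lewy's theorem $J_f=|a'|^2-|b'|^2>0$ on $\mathbf U$, so $a'$ is zero-free, $\log|a'|$ is harmonic, and the analytic dilatation $\omega:=b'/a'$ is holomorphic with $|\omega|<1$.

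To prove the displayed inequality I would fix $r\in(0,1)$ and work with $\Omega_r:=f(\{|z|<r\})$, which, $f$ being a homeomorphism, is a Jordan domain containing $0$ whose boundary is the curve $t\mapsto f(re^{it})$; the number $\mu(r):=\dist\bigl(0,f(\{|z|=r\})\bigr)$ is nondecreasing in $r$ with $\mu(r)\to d$ as $r\to1^-$, and $\{|w|<\mu(r)\}\subset\Omega_r$. By Green's theorem along the boundary and the identity $\partial_\varphi f=i\,(zf_z-\bar z f_{\overline z})$,
$$\pi\mu(r)^2\ \le\ |\Omega_r|\ =\ \tfrac12\,\im\!\int_0^{2\pi}\overline{f(re^{it})}\;\partial_t f(re^{it})\,dt .$$
The plan is to estimate the last integral by Cauchy--Schwarz against $\int_0^{2\pi}|f(re^{it})|^2\,dt$ and $\int_0^{2\pi}|\partial_t f(re^{it})|^2\,dt$, expand both by Parseval (their $n=1$ contributions being exactly $|c_1|^2+|d_1|^2=|f_z(0)|^2+|f_{\overline z}(0)|^2$), and then use the quasiconformal inequality $|f_{\overline z}|\le k|f_z|$ together with $J_f>0$ to dominate the higher Taylor coefficients of $a$ and $b$ by $|f_z(0)|$ and $|f_{\overline z}(0)|$, arriving at an estimate of the form $d\le C(K)\,\bigl(|f_z(0)|^2+|f_{\overline z}(0)|^2\bigr)^{1/2}\bigl(r^2\sqrt{1-r^2}\bigr)^{-1}$ valid for every $r\in(0,1)$. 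Minimising the right-hand side over $r$ then gives the dilatation-free inequality, since $\max_{0\le x\le1}x^2\sqrt{1-x^2}=\tfrac{2}{3\sqrt3}$, attained at $x=\sqrt{2/3}$; this is the origin of the constant $\tfrac{3\sqrt3}{2}$.

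The step I expect to be the real obstacle is exactly this domination of the higher coefficients. It fails for general harmonic \emph{univalent} maps, and quasiconformality is indispensable here — while so is harmonicity, since the $K$-quasiconformal but non-harmonic radial stretch $z\mapsto|z|^{K-1}z$ has vanishing derivative at the origin although $d>0$. Concretely one must exclude the degeneration in which $|a'|$ — equivalently $|\partial_t f(re^{it})|$ — collapses on most of a circle $\{|z|=r\}$ while $\Omega_r$ still encloses $\{|w|<\mu(r)\}$; to rule this out I would combine the interior distortion theorem for the quasiconformal map $f$ on the circles $\{|z|=r\}$ with the superharmonicity of $\log J_f=2\log|a'|+\log(1-|\omega|^2)$ — which follows from $\Delta\log(1-|\omega|^2)=-4|\omega'|^2(1-|\omega|^2)^{-2}\le0$ — so as to transfer a lower bound for $J_f(0)$ to the circles $\{|z|=r\}$. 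This is where I expect most of the effort to go.
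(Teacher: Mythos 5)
Your opening reduction is fine and coincides with the paper's closing step: since $1+k^2=2(1+K^2)/(1+K)^2$ and $|f_{\bar z}(0)|\le k|f_z(0)|$, the stated constant follows from the dilatation-free bound $\bigl(|f_z(0)|^2+|f_{\bar z}(0)|^2\bigr)^{1/2}\ge \frac{3\sqrt3}{2\pi}\,\dist(f(0),\partial\Omega)$. But that dilatation-free bound is the entire substance of the lemma, and your proposal does not prove it: you explicitly leave open the ``domination of the higher coefficients,'' which is not a technical footnote but the decisive step. The paper does not re-derive it either; it quotes Hall's sharp form of Heinz's inequality (for a harmonic map $g$ of $\mathbf{U}$ onto $\mathbf{U}$ with $g(0)=0$ one has $|g_z(0)|^2+|g_{\bar z}(0)|^2\ge \frac{27}{4\pi^2}=\bigl(\frac{3\sqrt3}{2\pi}\bigr)^2$) and combines it with a localization trick that is missing from your plan: put $\rho=\dist(f(0),\partial\Omega)$, set $U_1=f^{-1}(D(f(0),\rho))$, take a Riemann map $\Phi$ of $\mathbf{U}$ onto $U_1$ with $\Phi(0)=0$, and consider $g=\rho^{-1}(f\circ\Phi-f(0))$. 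Precomposition with a holomorphic map preserves harmonicity, so $g$ is a harmonic map of $\mathbf{U}$ \emph{onto} the unit disk with $g(0)=0$; Hall's inequality applies, and the Schwarz lemma $|\Phi'(0)|\le 1$ transfers the bound back to $f_z(0),f_{\bar z}(0)$. This is exactly how the hypothesis ``the image contains the disk $D(f(0),\rho)$'' is converted into the surjectivity that Heinz--Hall requires. Note in particular that the dilatation-free inequality holds for \emph{all} univalent harmonic maps (quasiconformality enters only when you replace $|f_z(0)|^2+|f_{\bar z}(0)|^2$ by $(1+k^2)|f_z(0)|^2$), so your remark that quasiconformality is indispensable at that stage is off the mark; your radial-stretch example only shows harmonicity is needed.

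As for the sketch you do give, it is unlikely to produce the constant even if completed. The area/Parseval route gives $\pi\mu(r)^2\le|\Omega_r|=\pi\sum_{n\ge1} n r^{2n}(|a_n|^2-|b_n|^2)$, and controlling the $n\ge2$ contributions by the $n=1$ term is precisely the hard content of Heinz-type results; Hall's constant $\frac{3\sqrt3}{2\pi}$ arises from a sharp Fourier-coefficient analysis whose extremal is the harmonic extension of a boundary map concentrating at three equally spaced points, not from $\max_{0\le x\le1}x^2\sqrt{1-x^2}=\frac{2}{3\sqrt3}$, whose appearance here is coincidental. Your outline is also internally inconsistent: if the coefficient domination really needs $|f_{\bar z}|\le k|f_z|$ and yields a constant $C(K)$, then minimizing over $r$ cannot return the $K$-independent constant $\frac{2\pi}{3\sqrt3}$ you claim. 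So there is a genuine gap; it is closed immediately by citing Heinz--Hall and adding the $\Phi$--Schwarz localization described above, which is the paper's proof.
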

\begin{proof}
Let $D=D(f(0),\rho)$, where $\rho=\mathrm{dist}(f(0),\partial \Omega)$ and define $U_1=f^{-1}(D)$. Let $\Phi$ be a conformal mapping of the unit disk onto $U_1$ such that $\Phi(0)=0$. Then $g=\frac{1}{\rho}(f\circ \Phi-f(0))$ is a $K-$quasiconformal harmonic mapping of the unit disk onto itself with $g(0)=0$. By Heinz inequality proved by Hall (\cite{hal,duren}), we have $$|g_z(0)|^2+|g_{\bar z}(0)|^2=\frac{|\Phi'(0)|^2}{\rho^2}(|f_z(0)|^2+|f_{\bar z}(0)|^2\ge \frac{27}{4\pi^2}.$$  By Schwarz inequality we have  $|\Phi'(0)|\le 1$.  Since $f$ is $k=\frac{K-1}{K+1}-$q.c. it follows that $$|f_z(0)|^2\ge \frac{27\rho^2}{(1+k^2)4\pi^2}$$ and this concludes the proof of the lemma.
\end{proof}

\begin{lemma}\label{apo}
Let $f=a+\overline{b}$ be a $K-$quasiconformal mapping of the unit disk $\mathbf{U}$  onto a Jordan domain $\Omega$  bounded by a rectifiable boundary and containing the disk $D(f(0), C_K)$, where \begin{equation}\label{ck}C_K=\frac{2\pi}{3\sqrt{6}}\sqrt{1+K^2}.\end{equation}
Let $|F|$ be the arc-length of $F\subset \mathbf{T}$ and $\sigma(E)$ be the arc-length of $E\subset \partial\Omega$ and define quasi-harmonic measure of $E$ by $\omega_f(E)=|f^{-1}(E)|$.

Then  \begin{enumerate}
       \item for every $\varepsilon>0$ there exists $\delta=\delta(\epsilon)$ such that if $E\subset \partial\Omega$ is measurable,
       then $\sigma(E)<\delta$ implies $\omega_f(E)\le \varepsilon |\partial \Omega|,$ or what is the same,
       \item for every $\varepsilon>0$ there exists $\delta=\delta(\epsilon)$, such that if $F\subset
       \partial D$ is measurable, then  $\sigma(f(F))<\delta$ implies $|F|\le \varepsilon \sigma(\partial \Omega).$
     \end{enumerate}
\end{lemma}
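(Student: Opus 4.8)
The plan is to read statement (1) as the assertion that the finite Borel measure $\omega_f$ is absolutely continuous with respect to arc length $\sigma$ on $\partial\Omega$, and then to deduce the quantitative $(\varepsilon,\delta)$-form from the elementary fact that a finite measure absolutely continuous with respect to another finite measure automatically enjoys this property. Indeed, if some $\varepsilon_0>0$ admitted no $\delta$, one could pick sets $E_n$ with $\sigma(E_n)<2^{-n}$ and $\omega_f(E_n)\ge\varepsilon_0\,|\partial\Omega|$; then $\limsup_n E_n$ would have $\sigma$-measure $0$ but $\omega_f$-measure $\ge\varepsilon_0\,|\partial\Omega|$, contradicting $\omega_f\ll\sigma$. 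Both measures are finite: $\omega_f(\partial\Omega)=2\pi$, and $\sigma(\partial\Omega)=|\partial\Omega|<\infty$ by rectifiability. Statement (2) is just (1) rewritten with $E=f(F)$: since $f$ restricts to a homeomorphism $\mathbf{T}\to\partial\Omega$ (a $K$-quasiconformal homeomorphism of the disk onto a Jordan domain extends to a homeomorphism of the closures), we have $\omega_f(f(F))=|F|$, and $|\partial\Omega|=\sigma(\partial\Omega)$. So it suffices to prove $\omega_f\ll\sigma$, i.e.\ that $\sigma(E)=0$ forces $|f^{-1}(E)|=0$.

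For this I would run an F.\ and M.\ Riesz–type argument on $f=a+\overline b$. By Proposition~\ref{propi} and standard Hardy space theory, $a,b\in H^1(\mathbf{U})$ and their boundary functions are absolutely continuous on $\mathbf{T}$; hence $a',b'\in H^1(\mathbf{U})$ as well, because $\partial_t a(e^{it})\in L^1(\mathbf{T})$ then has vanishing Fourier coefficients of negative index and so is an $H^1$ boundary function (the analytic form of the F.\ and M.\ Riesz theorem). In particular $f$ extends absolutely continuously to $\mathbf{T}$, $\partial_t f(e^{it})$ exists for a.e.\ $t$ and lies in $L^1(\mathbf{T})$, and, $t\mapsto f(e^{it})$ being a one-to-one absolutely continuous parametrization of the rectifiable curve $\partial\Omega$,
\[
\sigma(E)=\int_{f^{-1}(E)}\bigl|\partial_t f(e^{it})\bigr|\,dt\qquad\text{for every Borel }E\subset\partial\Omega .
\]
Thus $\omega_f\ll\sigma$ will follow as soon as $\partial_t f(e^{it})\ne 0$ for a.e.\ $t$.

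To see the non-vanishing, write $\partial_t f(z)=i\bigl(z f_z(z)-\bar z\,f_{\bar z}(z)\bigr)$ with $z=re^{it}$. Since $f_z=a'$ and $f_{\bar z}=\overline{b'}$ have (by the previous paragraph) non-tangential boundary limits a.e., letting $r\to1$ gives
\[
\partial_t f(e^{it})=i\bigl(e^{it}a'(e^{it})-e^{-it}\,\overline{b'(e^{it})}\bigr)\qquad\text{a.e.}
\]
The Beltrami inequality $|f_{\bar z}|\le k|f_z|$ on $\mathbf{U}$, with $k=\tfrac{K-1}{K+1}<1$, passes to these limits, so $|b'(e^{it})|\le k\,|a'(e^{it})|$ a.e., whence $|\partial_t f(e^{it})|\ge|a'(e^{it})|-|b'(e^{it})|\ge(1-k)\,|a'(e^{it})|$ a.e. Finally, by Lewy's theorem the harmonic homeomorphism $f$ has Jacobian $|f_z|^2-|f_{\bar z}|^2>0$, so $a'=f_z$ is zero-free on $\mathbf{U}$; being a non-zero element of $H^1$, its boundary function satisfies $\log|a'(e^{it})|\in L^1(\mathbf{T})$, hence $a'(e^{it})\ne 0$ a.e. Combining the last two displays, $\partial_t f(e^{it})\ne 0$ a.e., which yields $\omega_f\ll\sigma$ and finishes the argument.

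The only real obstacle I anticipate is the implication ``absolutely continuous boundary data $\Rightarrow$ $a',b'\in H^1$ with a.e.\ non-tangential limits'', i.e.\ the F.\ and M.\ Riesz input together with the Fatou theorem for $H^1$, plus the routine verification that the pointwise Beltrami inequality survives in the limit; everything else is soft measure theory. Note that the containment hypothesis $\Omega\supset D(f(0),C_K)$ is not needed for this qualitative statement — through Lemma~\ref{poki} it only supplies the normalization $|f_z(0)|\ge\tfrac{K+1}{2}$, which becomes relevant later, when an arbitrary subarc of $\mathbf{T}$ is rescaled onto all of $\mathbf{T}$ in the proof of Theorem~\ref{onemaine}.
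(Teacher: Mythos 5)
Your argument does establish the qualitative fact $\omega_f\ll\sigma$ correctly (the F.~and M.~Riesz input, $a'\in H^1$ zero-free so $\log|a'|\in L^1(\mathbf{T})$, and $|\partial_t f|\ge(1-k)|a'|$ a.e.\ are all sound, and the $\limsup E_n$ reduction of the $(\varepsilon,\delta)$ statement to absolute continuity is valid for a fixed pair of finite measures). The gap is that this produces a $\delta$ depending on the particular map $f$ through an ineffective compactness argument, whereas the lemma is needed — and is proved in the paper — with a $\delta$ controlled explicitly by $\varepsilon$ and $|\partial\Omega|$ alone. The paper's proof ends with the quantitative bound $\omega_f(E)\le 2|\partial\Omega|/|\log\sigma(E)|$, obtained by combining $\int_{\mathbf{T}}\log^+|\partial_t f|\,dt\le|\partial\Omega|$ with the lower bound $\int_{\mathbf{T}}\log|\partial_t f|\,\frac{dt}{2\pi}\ge\log[(1-k)|a'(0)|]\ge 0$ and then applying Jensen's inequality on $F$ to get $|F|e^{-|\partial\Omega|/|F|}\le\sigma(f(F))$. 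That lower bound is exactly where the hypothesis $\Omega\supset D(f(0),C_K)$ enters: via Lemma~\ref{poki} it gives $(1-k)|a'(0)|\ge 1$. Your closing remark that this hypothesis "is not needed" is the symptom of the problem — without it one cannot normalize the Jensen step, and without the Jensen step one has no uniform $\delta$.

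The uniformity is not a cosmetic refinement: in the proof of Theorem~\ref{onemaine} the lemma (through Corollary~\ref{apo1}) is applied to an infinite family of rescaled mappings $g$, one for each arc $I\subset\mathbf{T}$, each mapping onto a different domain $D'$; the argument only works because all these domains are normalized to contain $D(0,C_K)$ and have boundary length bounded in terms of $K$ and the chord-arc constant, so that a single $\delta(\varepsilon)$ serves them all. A $\delta$ that depends on the individual map, as yours does, cannot be fed into that argument. Since you already have all the ingredients ($|\partial_t f|\ge(1-k)|a'|$ a.e., $\log|a'|$ harmonic because $a'$ is zero-free, and $\log^+x\le x$), the fix is to replace the soft absolute-continuity step by the mean-value-plus-Jensen computation, using the containment hypothesis to make the constant explicit.
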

\begin{proof}
Since $\gamma=\partial\Omega$ is rectifiable, then by Proposition~\ref{propi}, $f$ is absolutely continuous on the boundary. We then have that $$\int_0^{2\pi} |\partial_t f(e^{it})| dt=|\partial\Omega|.$$ For $E=f(F)\subset\partial\Omega$, where $F$ is a measurable subset of $\mathbf{T}$, we have  $$\sigma(E)=\int_{f^{-1}(E)}|\partial_t f(e^{it})| dt.$$ Further, since $x\le \log^+ x:=\max\{0,\log x\}$ for $x\ge 0$, it follows that
$$\int_0^{2\pi} \log^+|\partial_t f(e^{it})| dt\le |\partial\Omega|.$$
Furthermore, by main value inequality for harmonic functions and Lemma~\ref{poki} we have \[\begin{split}\int_0^{2\pi} \log|\partial_t f(e^{it})| \frac{dt}{2\pi}&=\int_0^{2\pi} \log|e^{it}a'(e^{it})+\overline{b'(e^{it})e^{it}}| \frac{dt}{2\pi}\\ &\ge \int_0^{2\pi} \log[(1-k)|a'(e^{it})|\frac{dt}]{2\pi}\\&\ge \log[(1-k)|a'(0)|]\ge \log 1=0. \end{split}\]
Now if $F\subset \mathbf{T}$ we have $$\int_F \log|\partial_t f(e^{it})| \frac{dt}{2\pi}\ge -\int_{\mathbf{T}\setminus F} \log|\partial_t f(e^{it})| \frac{dt}{2\pi}$$ and \[\begin{split}-\int_{\mathbf{T}\setminus F} \log|\partial_t f(e^{it})| \frac{dt}{2\pi}&\ge -\int_{\mathbf{T}\setminus F} \log^+|\partial_t f(e^{it})| \frac{dt}{2\pi}\\&\ge -\int_{\mathbf{T}} \log^+|\partial_t f(e^{it})| \frac{dt}{2\pi}\ge -\frac{|\partial\Omega|}{2\pi}.\end{split}\]
Furthermore, $$\int_F \log|\partial_t f(e^{it})| {dt}\ge -{|\partial\Omega|}.$$  By Jensen's inequality we obtain
$$e^{\frac{-{|\partial\Omega|}}{|F|}}\le \int_F |\partial_t f(e^{it})| \frac{{dt}}{|F|}.$$ So $$|F|e^{\frac{-{|\partial\Omega|}}{|F|}}\le \sigma(f(F))$$  i.e., $$|F|\le e^{\frac{{|\partial\Omega|}}{|F|}} \sigma(f(F)).$$ Hence, for $E=f(F)$  we have
$$\frac{1}{\sigma(E)}\le \frac{ e^{\frac{{|\partial\Omega|}}{|f^{-1}(E)|}}}{|f^{-1}(E)|}$$ and so
$$|\log {\sigma(E)}|\le \frac{{|\partial\Omega|}}{|f^{-1}(E)|}-\log |f^{-1}(E)|.$$ Since $x\log 1/x \le 1\le |\partial\Omega|$ we obtain finally that
$$\omega_f(E)\le \frac{2|\partial\Omega|}{|\log {\sigma(E)}|}.$$
\end{proof}

\begin{corollary}\label{apo1}
Let $f=a+\overline{b}$ be a $K-$ quasiconformal mapping of  the unit square $\mathbf{K}=[0,1]\times[0,1]$ onto a Jordan domain $\Omega$  bounded by a rectifiable boundary and containing the disk $D(f(0), C_K)$. Let $|F|$ be the arc-length of $F\subset\partial \mathbf{K}$ and $\sigma(E)$ be the arc-length of $E\subset \partial\Omega$.

Then for every $\varepsilon>0$ there exists $\delta=\delta(\epsilon)$ such that for every $F\subset \partial \mathbf{K}$  we have $$\sigma(f(F))<\delta \Rightarrow|F|\le \varepsilon \sigma(\partial \Omega).$$
\end{corollary}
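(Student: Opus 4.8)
The plan is to reduce Corollary~\ref{apo1} to Lemma~\ref{apo} by a conformal change of the independent variable that transports the square to the disk.

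First I would fix a conformal mapping $\Psi$ of the unit disk $\mathbf{U}$ onto the interior of the square $\mathbf{K}$, normalized so that $\Psi(0)$ is the point of $\mathbf{K}$ at which the disk $D(f(0),C_K)$ of the hypothesis is centred (necessarily an interior point, e.g.\ the centre of $\mathbf{K}$). Since $\partial\mathbf{K}$ is a Jordan curve, $\Psi$ extends to a homeomorphism of $\overline{\mathbf{U}}$ onto $\mathbf{K}$; and since $\partial\mathbf{K}$ is rectifiable, the classical theory of conformal maps of rectifiable Jordan domains gives $\Psi'\in H^1(\mathbf{U})$, makes the boundary correspondence $t\mapsto\Psi(e^{it})$ absolutely continuous, and yields $\ell_{\partial\mathbf{K}}(\Psi(A))=\int_A|\Psi'(e^{it})|\,dt$ for every measurable $A\subset\mathbf{T}$, where $\ell_{\partial\mathbf{K}}$ denotes arc-length along $\partial\mathbf{K}$. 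Then I would set $g:=f\circ\Psi$. Because $\Psi$ is holomorphic and $f=a+\overline b$ with $a,b$ holomorphic, $g=(a\circ\Psi)+\overline{(b\circ\Psi)}$ is again a harmonic mapping; as a composition of a $K$-quasiconformal map with a conformal one it is $K$-quasiconformal; it maps $\mathbf{U}$ onto $\Omega$, a Jordan domain with rectifiable boundary, extends to a homeomorphism of the closures, and satisfies $g(0)=f(\Psi(0))$, so $D(g(0),C_K)\subset\Omega$. Thus $g$ meets all the hypotheses of Lemma~\ref{apo}.

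Next, given $\varepsilon>0$, I would apply statement (2) of Lemma~\ref{apo} to $g$ with a parameter $\varepsilon'>0$ to be fixed later, obtaining $\delta'=\delta'(\varepsilon')>0$ such that $\sigma(g(\widetilde F))<\delta'$ implies $|\widetilde F|\le\varepsilon'\sigma(\partial\Omega)$ for every measurable $\widetilde F\subset\mathbf{T}$. For a measurable $F\subset\partial\mathbf{K}$ put $\widetilde F:=\Psi^{-1}(F)$; then $g(\widetilde F)=f(\Psi(\widetilde F))=f(F)$, so $\sigma(f(F))<\delta'$ forces $|\widetilde F|\le\varepsilon'\sigma(\partial\Omega)$, and hence, by absolute continuity of $\Psi$ on $\mathbf{T}$,
\[
|F|=\ell_{\partial\mathbf{K}}(\Psi(\widetilde F))=\int_{\widetilde F}|\Psi'(e^{it})|\,dt\le\eta\!\left(\varepsilon'\sigma(\partial\Omega)\right),
\]
where $\eta(s):=\sup\{\int_A|\Psi'(e^{it})|\,dt:\ |A|\le s\}\to 0$ as $s\to0^+$ because $|\Psi'|\in L^1(\mathbf{T})$. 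Since $\sigma(\partial\Omega)$ is a fixed finite number, I can then choose $\varepsilon'$ so small that $\eta(\varepsilon'\sigma(\partial\Omega))\le\varepsilon\,\sigma(\partial\Omega)$ and take $\delta:=\delta'(\varepsilon')$; with this choice $\sigma(f(F))<\delta$ gives $|F|\le\varepsilon\,\sigma(\partial\Omega)$, which is the asserted implication.

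The measure-transport identity is routine; the point that genuinely needs care is that $\Psi$ is \emph{not} bi-Lipschitz — at the four corners of $\mathbf{K}$ its derivative blows up like an inverse square root — so $|F|$ and $|\Psi^{-1}(F)|$ cannot be compared directly. The substitute is the weaker fact that the boundary correspondence is absolutely continuous with an $L^1$ (not $L^\infty$) density, which still transfers the ``small set'' conclusion and is guaranteed by rectifiability of $\partial\mathbf{K}$. A minor secondary point is that $\Psi^{-1}$ carries measurable sets to measurable sets, which holds since $\Psi|_{\mathbf{T}}$ is a homeomorphism whose inverse is again absolutely continuous (its density $1/|\Psi'\circ\Psi^{-1}|$ being finite a.e.); alternatively one simply restricts the statement to Borel sets, which is all that is used later.
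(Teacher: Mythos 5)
Your proposal is correct and follows essentially the same route as the paper: both reduce the statement to Lemma~\ref{apo} by precomposing $f$ with a conformal map of $\mathbf{U}$ onto the square and then transporting the ``small set'' conclusion through the boundary correspondence. The only difference is one of detail --- the paper simply invokes the well-known mutual absolute continuity of harmonic measure and arc-length on $\partial\mathbf{K}$ (via the explicit Schwarz--Christoffel map), whereas you spell out the same fact through the $L^1$ density $|\Psi'|$ and the modulus $\eta$; this is a harmless elaboration, not a different argument.
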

\begin{proof}
In order to deal with the square $\mathbf{K}$, take a conformal mapping $\varphi$ of the unit disk $\mathbf{U}$ onto $\mathbf{K}$ which maps the origin to the center of $\mathbf{K}$. Then it is well-known  that the harmonic measure of the square $\mathbf{K}$ and the arc-length are mutually absolutely continuous.

More precisely $$\varphi(z)=\frac{1+i}{2}+\frac{(2-2 i) \sqrt{2 \pi }}{\Gamma\left[\frac{1}{4}\right]^2}\int_0^z\frac{1}{\sqrt{1-z^4}}dz$$ is a conformal mapping of the unit disk onto the square mapping the origin to the center of the square. Let $\phi(z)=\varphi^{-1}$. Then for every $\varepsilon>0$ there is $\varepsilon_1>0$ such that if $F\subset \partial \mathbf{K}$, then $$|\phi(F)|\le \varepsilon_1(\sigma(\partial\Omega))\Rightarrow |F|\le \varepsilon\sigma(\partial\Omega).$$

Then the mapping $f_0=f\circ \varphi$ is a $k-$ quasiconformal harmonic mapping of the unit disk onto $\Omega$. By using Lemma~\ref{apo}, for $\varepsilon_1>0$, there is $\delta=\delta(\varepsilon_1)$, such that $\phi(F)\subset \mathbf{T}$ and $|f_0(\phi(F))|<\delta$ implies $|\phi(F)|< \varepsilon_1\sigma(\partial\Omega)$. Thus for $\varepsilon>0$ there is $\delta=\delta(\varepsilon)$ such that for   $F\subset \partial\mathbf{K}$ we have $$\sigma(f(F))\le \delta\Rightarrow |F|< \varepsilon\sigma(\partial\Omega).$$  \end{proof}

\begin{lemma}\label{leqa}
If $f$ is a $K-$q.c. harmonic mapping of the unit disk onto the domain $D$, then $$|Df(z)|\cong \frac{\mathrm{dist}(f(z),\partial D)}{1-|z|}.$$ Here and in the sequel, $L\cong R$ means $C^{-1} L\le R\le C R$, where the constant $C$ depends only on $K$.
\end{lemma}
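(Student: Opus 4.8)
The plan is to remove the $z$-dependence by precomposing with a disk automorphism, reducing the estimate to the single point $z=0$, and then to treat the two inequalities by different tools: Lemma~\ref{poki} for the lower bound, and a distortion estimate for quasiconformal maps together with the gradient bound for harmonic functions for the upper bound. Concretely, fix $z\in\mathbf{U}$ and let $\phi(w)=\frac{w+z}{1+\bar z w}$, the M\"obius automorphism of $\mathbf{U}$ with $\phi(0)=z$, so that $|\phi'(0)|=1-|z|^2$. Since precomposition with a holomorphic map preserves both harmonicity and $K$-quasiconformality, $g:=f\circ\phi$ is a $K$-q.c. harmonic mapping of $\mathbf{U}$ onto $D$ with $g(0)=f(z)$, and the chain rule for the Wirtinger derivatives gives $g_z(0)=f_z(z)\phi'(0)$ and $g_{\bar z}(0)=f_{\bar z}(z)\overline{\phi'(0)}$, hence $|Dg(0)|=|Df(z)|\,(1-|z|^2)$. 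As $1-|z|\le 1-|z|^2\le 2(1-|z|)$, it is enough to prove $|Dg(0)|\cong\dist(g(0),\partial D)$ for an arbitrary $K$-q.c. harmonic $g\colon\mathbf{U}\to D$, with constant depending only on $K$.

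For the lower bound, rewrite $g=A+\overline{B}$ as $g=(A+\overline{B(0)})+\overline{(B-B(0))}$; this does not change $g_z(0)=A'(0)$, so Lemma~\ref{poki} applies and yields $|g_z(0)|\ge c_K\dist(g(0),\partial D)$ with $c_K=\frac{3\sqrt3(1+K)}{2\sqrt2\,\pi\sqrt{1+K^2}}$. Since $|Dg(0)|=|g_z(0)|+|g_{\bar z}(0)|\ge|g_z(0)|$, the lower estimate follows at once.

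For the upper bound, write $\rho=\dist(g(0),\partial D)$, so $B_0:=B(g(0),\rho)\subset D$, and observe first that there is $r_K\in(0,1)$ depending only on $K$ with $g(\overline{B(0,r_K)})\subset B_0$. This is a distortion estimate for quasiconformal mappings: the round ring $\{r_K<|w|<1\}$ has modulus $\frac1{2\pi}\log(1/r_K)$, so its $g$-image is a ring of modulus at least $\frac1{2\pi K}\log(1/r_K)$ separating $g(\overline{B(0,r_K)})\ni g(0)$ from $\partial D$; if $\diam g(\overline{B(0,r_K)})\ge\rho$ this ring would separate two continua, one of diameter $\ge\rho$ and the other within distance $\rho$ of it, which forces its modulus below an absolute Teichm\"uller-type bound, and taking $r_K$ small enough gives a contradiction (one may instead cite the Koebe-type distortion theorem for quasiconformal maps directly). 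Granting this, $g$ is harmonic on $\overline{B(0,r_K)}$ with $|g(w)-g(0)|<\rho$ there, so the gradient estimate for harmonic functions, $|Dg(0)|\le\frac{C}{r_K}\sup_{|w|=r_K}|g(w)-g(0)|$ with $C$ absolute, gives $|Dg(0)|\le\frac{C}{r_K}\rho$. Combining the three steps yields $|Df(z)|\cong\dist(f(z),\partial D)/(1-|z|)$ with a constant $\max\{2/c_K,\ C/r_K\}$ depending only on $K$. The one non-routine ingredient is the distortion estimate $g(\overline{B(0,r_K)})\subset B(g(0),\dist(g(0),\partial D))$ — the quantitative, $K$-dependent fact that a normalized quasiconformal map does not spread a fixed disk about the origin past the inscribed ball of its image; the automorphism reduction, the appeal to Lemma~\ref{poki}, and the harmonic gradient bound are all standard.
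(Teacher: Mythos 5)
Your proof is correct, and it differs from the paper's in both halves in ways worth noting. For the lower bound, the paper maps the unit disk conformally onto $f^{-1}(D(f(z),\rho))$, rescales to get a q.c.\ harmonic self-map $F$ of $\mathbf{U}$, and invokes the quasi-isometry estimate $|DF(w)|\cong(1-|F(w)|)/(1-|w|)$ from \cite{MMM} together with Schwarz--Pick for the conformal factor; you instead precompose with a M\"obius automorphism to reduce to the origin and then quote Lemma~\ref{poki} directly (correctly observing that the normalization $b(0)=0$ there can be arranged without changing $g_z(0)$). Your route is more economical inside the paper, since it reuses an already-proved lemma rather than an external result, though under the hood Lemma~\ref{poki} performs essentially the same inscribed-disk construction. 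For the upper bound, the paper says only ``by the Cauchy inequality,'' which by itself gives $|Df(z)|\le C(1-|z|)^{-1}\sup_{B(z,(1-|z|)/2)}|f-f(z)|$ and still requires knowing that $f$ does not spread a fixed hyperbolic neighbourhood of $z$ beyond $O(\dist(f(z),\partial D))$; your modulus-of-rings argument (the image of the round annulus $\{r_K<|w|<1\}$ separates $g(\overline{B(0,r_K)})$ from $\partial D$ and has modulus at least $\tfrac{1}{2\pi K}\log(1/r_K)$, which exceeds the Teichm\"uller bound unless $\diam g(\overline{B(0,r_K)})<\rho$) supplies exactly the step the paper leaves implicit, with an explicit $K$-dependence. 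Note that this step uses that the complement of $D$ is a continuum reaching $\infty$, which holds in the paper's setting of Jordan image domains; the lemma as literally stated for an arbitrary ``domain $D$'' needs that reading. Both arguments yield constants depending only on $K$, as required.
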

\begin{proof}
By the Cauchy inequality for harmonic functions for $|z|<1$ we obtain that $$|Df(z)|\le C \frac{\mathrm{dist}(f(z),\partial D)}{1-|z|}.$$ Let $|z|<1$ and $\rho=\mathrm{dist}(f(z),\partial D)$. Let $U_1=f^{-1}( D(f(z),\rho))$ and let $\varphi$ be a conformal mapping of the unit disk onto $U_1$ with $\varphi(0)=z$. Then
$F(w)=\frac{1}{{\rho}}({f(\varphi(w))}-f(z))$ is a q.c. harmonic mapping of the unit disk onto itself satisfying $F(0)=0$. Thus by a known result (see e.g., \cite{MMM}) $$|D(F)(w)|\cong\frac{1-|F(w)|}{1-|w|}.$$ Since $|DF(w)|=|\varphi'(w)| |Df(\varphi(w))|$ and $$\frac{1}{|\varphi'(w)|}\ge \frac{1-|w|^2}{1-|\varphi(w)|^2},$$ it follows that $$|Df(z)|\ge \rho |DF(0)|\frac{1-|0|^2}{1-|\varphi(0)|^2}\ge C \rho \frac{{1-|F(0)|}}{1-|z|^2}=\frac{C\mathrm{dist}(f(z),\partial D)}{1-|z|^2}.$$
\end{proof}

\begin{proposition}[Distortion Theorem] \cite[p.~63]{vais} and \cite[p.~383]{ge}\label{poli}.
Let $f$ be a $K$ q.c. mapping of the complex plane $\mathbf{C}$ onto itself. Then there is a constant $C$ depending only on $K$ such that for any $r>0$ and $z\in \mathbf{C}$ there is $r'>0$ such that \begin{equation}\label{disto}
D(f(z), r')\subset f(D(z,r))\subset D(f(z),C r'),
\end{equation}
where $B(z,r)=\{w: |w-z|<r\}$.
\end{proposition}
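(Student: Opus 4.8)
The plan is to derive the Distortion Theorem from the \emph{quasisymmetry} of globally defined quasiconformal mappings. The starting point is the classical fact (see \cite{vais} and \cite{ge}) that every $K$-quasiconformal homeomorphism $f$ of $\mathbf{C}$ onto itself is $\eta$-quasisymmetric for some homeomorphism $\eta\colon[0,\infty)\to[0,\infty)$ depending only on $K$. Only the following consequence of this will be used: with $H=\eta(1)\ge1$,
$$|w_1-z|\le|w_2-z|\ \Longrightarrow\ |f(w_1)-f(z)|\le H\,|f(w_2)-f(z)|\qquad(z,w_1,w_2\in\mathbf{C}).$$

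Given $z\in\mathbf{C}$ and $r>0$, I would set
$$r'=l:=\min_{w\in\partial D(z,r)}|f(w)-f(z)|,\qquad L:=\max_{w\in\partial D(z,r)}|f(w)-f(z)|,$$
both attained by continuity and compactness. Applying the displayed implication to two points of $\partial D(z,r)$ gives at once $L\le H\,l$. For the left-hand inclusion in \eqref{disto}: since $f$ is a homeomorphism of the whole plane, $f(D(z,r))$ is precisely the bounded Jordan domain enclosed by the Jordan curve $f(\partial D(z,r))$, and it contains $f(z)$; as $\dist(f(z),f(\partial D(z,r)))=l=r'$, the ball $D(f(z),r')$ is contained in that domain. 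For the right-hand inclusion: $f(D(z,r))$ is contained in the closed convex hull of its boundary $f(\partial D(z,r))\subset\overline{D(f(z),L)}$, and since closed balls are convex this forces $f(D(z,r))\subset\overline{D(f(z),L)}\subset D(f(z),Cr')$ with, say, $C=2H$. (Alternatively, for an interior point $w\in D(z,r)$ one can estimate $|f(w)-f(z)|\le H\,|f(w')-f(z)|\le HL\le H^2 l$ directly from quasisymmetry, choosing $w'\in\partial D(z,r)$.)

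The elementary parts above are genuinely elementary; the one substantial input is the quasisymmetry of global $K$-quasiconformal maps, and this is the step I would expect to be the main obstacle if a self-contained proof were demanded. Its proof runs through the Teichm\"uller and Gr\"otzsch modulus estimates for ring domains (equivalently, the distortion of the moduli of curve families under quasiconformal maps), and it is exactly for this point that the references \cite{vais,ge} are invoked here.
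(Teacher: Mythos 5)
Your argument is correct. Note first that the paper itself offers no proof of this proposition: it is quoted as a known result with references to V\"ais\"al\"a and Gehring, so there is no internal proof to compare against. Your derivation is the standard way of unpacking those references. Setting $l=\min_{\partial D(z,r)}|f(w)-f(z)|$ and $L=\max_{\partial D(z,r)}|f(w)-f(z)|$, the weak quasisymmetry inequality with $H=\eta(1)$ gives $L\le Hl$ immediately; the left inclusion follows because the open ball $D(f(z),l)$ is connected, contains $f(z)$, and misses the Jordan curve $f(\partial D(z,r))$, hence lies in the Jordan domain $f(D(z,r))$; and your parenthetical alternative for the right inclusion is the cleanest: for $w\in D(z,r)$ and $w'\in\partial D(z,r)$ realizing $l$ one has $|w-z|<|w'-z|$, so $|f(w)-f(z)|\le Hl$, giving $C=2H$. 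All of these steps are sound. The only caveat --- which you state explicitly and correctly --- is that the entire mathematical content has been pushed into the assertion that a global $K$-quasiconformal self-map of the plane is $\eta$-quasisymmetric with $\eta$ depending only on $K$ (via Gr\"otzsch/Teichm\"uller ring modulus estimates). That assertion is essentially equivalent to the proposition being proved (bounded linear dilatation $L/l$ is how \cite{vais} and \cite{ge} phrase the distortion theorem in the first place), so your write-up should be read as a reduction of the stated form to the standard form in the cited sources rather than as an independent proof; for the purpose this proposition serves in the paper, that is exactly the right thing to do.
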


A \emph{quasicircle} is defined as the image of a circle under a quasiconformal mapping of the extended complex plane $\overline{\mathbf{C}}\cong S^2$. See \cite{alfors} for comprehensive study of related problems.

\begin{lemma}\label{tomi}
If $D$ is a $k'$ quasicircle and $f$ a $k$-quasiconformal harmonic mapping of the unit disk onto $D$, then the image of every diameter $d=e^{it}[-1,1]$, $t\in[0,\pi)$, under $f$ is
is a chord-arc curve with a constant $C$ depending only on $k$ and $k'$.
\end{lemma}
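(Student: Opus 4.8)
The plan is to extend the quasiconformal harmonic mapping $f\colon\mathbf U\to D$ to a global quasiconformal mapping and then exploit that the image of a diameter is a quasigeodesic between two boundary quasicircle points. First I would use the fact that $D$ is a $k'$-quasicircle to extend $f$ to a $K'$-quasiconformal self-homeomorphism $F$ of $\overline{\mathbf C}$, with $K'$ depending only on $k$ and $k'$; this uses the quasiconformal reflection principle across the quasicircle $\partial D$ (Ahlfors), together with the bound on $f$ on $\mathbf U$ coming from its $k$-quasiconformality. The diameter $d=e^{it}[-1,1]$ has endpoints $e^{it},-e^{it}\in\mathbf T$, so $f(d)$ is a Jordan arc in $\overline D$ joining $f(e^{it})$ to $f(-e^{it})\in\partial D$.

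Next I would establish the chord-arc (Lavrentiev) bound $\sigma(z,w)\le C|z-w|$ for points $z,w$ on $\Gamma:=f(d)$. The key geometric input is Lemma~\ref{leqa}, which gives $|Df(\zeta)|\cong \operatorname{dist}(f(\zeta),\partial D)/(1-|\zeta|)$ on the disk. Along the diameter, parametrize $\zeta=se^{it}$, $s\in(-1,1)$; then the arc-length element of $\Gamma$ is $|\partial_s f(se^{it})|\,ds$, which is comparable to $|Df(se^{it})|\,ds$ up to $K$. So I need to control $\int |Df|\,ds$ over a sub-arc in terms of the Euclidean distance between its endpoints. The standard mechanism: cover the relevant sub-segment of $d$ by a Whitney-type chain of disks on which $\operatorname{dist}(f(\zeta),\partial D)$ is roughly constant, use the Distortion Theorem (Proposition~\ref{poli}) for the global q.c.\ map $F$ to transfer hyperbolic-type estimates from the segment to its image, and sum a geometric series. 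Concretely, for $\zeta_1=s_1e^{it}$, $\zeta_2=s_2e^{it}$ one shows $\operatorname{length}(f([\zeta_1,\zeta_2]))\le C\operatorname{diam}(f([\zeta_1,\zeta_2]))$, and then the quasicircle property of $\partial D$ (bounded turning) plus the fact that $F$ maps the complementary region quasiconformally gives $\operatorname{diam}(f([\zeta_1,\zeta_2]))\le C|f(\zeta_1)-f(\zeta_2)|$ even when $\zeta_1,\zeta_2$ lie on $d$ rather than on $\mathbf T$, because a q.c.\ image of a segment is a quasigeodesic and hence has bounded turning.

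The main obstacle, I expect, is the last comparison: passing from the diameter of the image sub-arc to the chord length $|f(\zeta_1)-f(\zeta_2)|$ uniformly in $t$. For sub-arcs lying deep inside $D$ this is the bounded-turning property of the quasigeodesic $F(d)$ (a direct consequence of $F$ being globally q.c.), but near the endpoints $e^{\pm it}$ one must simultaneously use that $\partial D$ is a quasicircle so that the two "prongs" of $f(d)$ near a boundary point cannot fold back on each other without the preimages (two radial segments near $e^{\pm it}$) being forced apart by the q.c.\ distortion estimate. I would handle this by splitting $\Gamma$ at the point $f(0)$ into two radial-like halves and treating each half as the q.c.\ image under $F$ of a radius, invoking known chord-arc estimates for q.c.\ images of line segments whose endpoints sit on a quasicircle; the uniformity in $t$ then follows because all constants depend only on $K'=K'(k,k')$ and on the quasicircle constant $k'$, rotation-invariantly. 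Finally, taking the max of the two half-constants yields the single constant $C=C(k,k')$ claimed.
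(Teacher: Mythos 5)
Your plan is correct and follows essentially the same route as the paper: extend $f$ to a global quasiconformal map, bound $|\partial_r f|$ via Lemma~\ref{leqa}, use the Distortion Theorem to force geometric decay of $\mathrm{dist}(f(re^{it}),\partial D)$ along a Whitney-type sequence of radii $1-r_{n+1}=M^{-1}(1-r_n)$, and sum the resulting geometric series to bound the length of an image sub-arc by a constant times the chord. The only minor difference is the final comparison: you go through the bounded-turning property of the quasiline $F(e^{it}\mathbf{R})$ to pass from diameter to chord length, whereas the paper bounds $\int_a^1|\partial_r f|\,dr$ directly by the chord to the boundary endpoint $f(e^{it})$ and then subtracts.
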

\begin{proof}
We observe that $f$ extends to a global quasiconformal homeomorphism.

We have to consider the integrals $$\int_{r_0}^1 |\partial_r f(re^{it})|dr,$$ and to show that they are comparable to $\rho=\mathrm{dist}(f(r_0e^{it}),f(e^{it}))$.

By the distortion inequality (see e.g. \cite[p.~224]{kenig2}), for sufficiently large $M$ we have
\begin{equation}
|z_1-z_2|\le M^{-k}|z_2-z_3| \Rightarrow |f(z_1)-f(z_2)|\le 2^{-k}|f(z_2)-f(z_3)|.
\end{equation}
For $r_0$ given define $r_n$, $n\ge 1$, inductively by $r_n\in[0,1]$ such that $$1-r_n=\frac{1}{M}(1-r_{n-1}),$$   and  $$|f(r_{n+1}e^{it})-f(e^{it})|\le \frac{1}{2}|f(r_ne^{it})-f(e^{it})|.$$ Thus  $$|f(r_{n+1}e^{it})-f(e^{it})|\le \frac{1}{2^{n+1}}|f(r_0e^{it})-f(e^{it})|,$$ and for  $r_{n}\le r \le r_{n+1}$ $$|f(re^{it})-f(e^{it})|\le \frac{1}{2^n}|f(r_0e^{it})-f(e^{it})|.$$
So if $r\in [r_n,r_{n+1}]$ we have that
$$\mathrm{dist}(f(re^{it}),\partial D)\le \frac{1}{2^n}|f(r_0e^{it})-f(e^{it})|.$$

Now from Lemma~\ref{leqa} we have  \[\begin{split}\int_{r_0}^1 |\partial_r f(re^{it})|dr&\le \int_{r_0}^1 |\nabla f(re^{it})|dr\le C\int_s^1 \frac{\mathrm{dist}(f(re^{it}),\partial D)}{1-r} dr\\&\le
C\sum_{k=0}^\infty \int_{r_k}^{r_{k+1}} \frac{\mathrm{dist}(f(re^{it}),\partial D)}{1-r} dr\\&\le  C\sum_{k=0}^\infty \int_{r_k}^{r_{k+1}}
 \frac{2^{-k}\rho}{1-r} dr\\&=C\sum_{k=0}^\infty 2^{-k}\log\frac{1-r_{k}}{1-r_{k+1}}{\rho}
\le C\log M\sum_{k=0}^\infty {2^{-k}\rho}\\&=2 C M \mathrm{dist}(f(r_0e^{it}),f(e^{it})).\end{split}\]

Next we have \[\begin{split}\int_{a}^b |\partial_r f(re^{it})|dr&=\int_0^b|\partial_r f(re^{it})|dr-\int_0^a|\partial_r f(re^{it})|dr\\&\le
2CM \mathrm{dist}(f(be^{it}),1)-\mathrm{dist}(f(ae^{it}),f(e^{it}))\\&\le 2CM(\mathrm{dist}(f(be^{it}),f(e^{it}))-\mathrm{dist}(f(ae^{it}),f(e^{it})))\\&\le 2CM\mathrm{dist}(f(be^{it}),f(ae^{it})).\end{split}\]
So $|f(e^{it}[a,b])|\le C |a-b|$ for every $t\in [0,\pi)$ and $a$ and $b$ in $[-1,1]$.
\end{proof}

\section{Proof of main results}

\begin{proof}[Proof of Theorem~\ref{onemaine}]
In the course of a proof, the value of a constant $C$ may change from one
occurrence to the next.

If $f$ is a $K-$quasiconformal harmonic mapping of the unit disk onto a chord-arc Jordan domain $\Omega$, then it exists a constant $K_1=K_1(K)$ and a $K_1-$quasiconformal extension of $f$ onto the whole space. We will denote the extension by $f$ and $K_1$ by $K$.

Let $I\subset \mathbf{T}$ with $|I|<1/2$. By appropriate rotation, we can assume that $I=\{e^{is}: 0\le s\le |I|\}$. Let $R_I=\exp([-|I|,0]\times [0,|I|])$. Let $\zeta_I$ be the mid-point of $I$, let $w_I=\exp(|I|(-1+i)/2)$ and let
$\varrho=\mathrm{dist}(w_I,\partial D)$. By applying the distortion theorem, there is $C>0$ depending only on $K$ such that
\begin{equation}\label{qc}D(f(w_I),\varrho/C)\subset f(R_I)\subset  D(f(w_I),C\varrho).\end{equation}
Then by the chord-arc condition and \eqref{qc} we infer that $$
\sigma(f(I))\cong\mathrm{diam}( D(f(w_I),C\varrho))\cong \varrho.$$ From \eqref{qc} and Lemma~\ref{leqa} we obtain that
the length of $f(e^{-|I|}I)$ is smaller than $C \varrho$.  By using Lemma~\ref{tomi}, we obtain that two other sides of the quadrilateral $\partial f(R_I)$ have length $\le C \varrho$.

Let $g$ be the q.c. harmonic mapping defined by $$g(z)=\frac{\sqrt{(1+K^2)/6}\pi C}{\varrho} f[\exp(z|I|-|I|)].$$  Then $g$ is a $k-$q.c. harmonic mapping of the unit square $\mathbf{K}$ onto the chord-arc Jordan domain $D'$ containing the disk $D(0,C_K)$ and we can apply Corollary~\ref{apo1}. Since $\exp$ is a bi-Lipschitz mapping with an absolute constant  on $[-|I|,0]\times [0,|I|]$, we obtain that for every $\varepsilon>0$ there exists $\delta=\delta(\epsilon)$, such that every $F\subset \partial D$ with $\sigma(g(F))<\delta$ we have $|F|\le \varepsilon \sigma(\partial D').$ Since $|I|=|Q_I|/4$ and $\sigma(f(I))\cong \sigma(f(\partial Q_I))$, it follows that for every $\varepsilon$ there exists $\delta=\delta(\varepsilon)$ such that if $E\subset I$ is measurable, then we have $$\frac{\sigma(f(E))}{\sigma(f(I))}\le \delta\Rightarrow \frac{|E|}{|I|}\le \varepsilon.$$ It follows that the measures $\sigma(E)$ and $|E|$ are $A_\infty$ equivalent or what is the same it is satisfied the  Coifman-Fefferman ($A_\infty$) condition (\cite{coif}, \cite[p.~168]{garnet}) for the weight $\omega(e^{i\varphi})=|\partial_\varphi f(e^{i\varphi})|$, which is equivalent with the Muckenhoupt ($A_p$) condition (\cite{muc}).

\end{proof}
\begin{proof}[Proof of Theorem~\ref{lindel}]
a) Assume that $\gamma\in C^1$. Prove first that the function $\arg(f_t(re^{it})/z)$ is well-defined and smooth on $\mathbf{U}^*$.
It is well-known that every continuous function $P:\Omega\to \mathbf{C}^*$, defined in a simply-connected domain $\Omega$,  has a unique continuous logarithm, $Q:\Omega\to \mathbf{C}$, up to a normalization condition $Q(z_0)=w_0$. This means that there is a mapping $Q$ such that  $e^{Q(z)}=P(z)$ and $e^{Q(z_0)}=P(z_0)$ and we write it as $Q(z)=\log P(z)$.

Let $$H(z)=\frac{\partial_\varphi f(z)}{z}.$$ Then $$H(z)=ia'(z)\left(1-\frac{\overline{zb'(z)}}{za'(z)}\right).$$
So $$\log(H(z))=\log(ia'(z))-\sum_{k=1}^\infty \frac{m(z)^k}{k},$$ where $$m(z)=\frac{\overline{zb'(z)}}{za'(z)}$$ satisfies the condition $$|m(z)|\le \frac{K-1}{K+1}<1,$$ is well defined in $\mathbf{U}^*$. Since $U(z)=\mathrm{Im}(\log(H(z))),$ it follows that $U(z)$ is well-defined smooth function in $\mathbf{U}^*$.

Let us prove now that $\arg[f_t(re^{it})/z]$ has a continuous extension to $\mathbf{T}$.

Without loss of generality we will assume in the proof that $\kappa$ from Theorem~\ref{poqa} is $\le 1$.
Let $z=re^{i\varphi}$. Moreover, assume without loosing of generality that $|\gamma|=2\pi$.

Let $g$ be an arc-length parametrization of $\gamma$.  Since $\gamma\in C^1$, its arc-length parametrization $g$ is in $C^1$ and $g'(s)=e^{i\theta(s)}$, where $\theta$ is continuous in $[0,2\pi]$. Moreover $\theta$ satisfies the condition $\theta(2\pi)-\theta(0)=2\pi$, and therefore it has a natural extension to $\mathbf{R}$:  $\theta(x+2k\pi)=:2 k\pi+\theta(x)$, $k\in\mathbf{Z}$.  Since $f^*$ is a homeomorphism,  there is a homeomorphism $\psi:\mathbf{R}\to \mathbf{R}$ with $\psi(2\pi)-\psi(0)=2\pi$, such that $f^*(e^{it})=g(\psi(t))$. Then we have
\begin{equation}\label{beta}\partial_t f^*(e^{it})=g'(\psi(t))\psi'(t)=e^{i\beta(t)}\psi'(t),\end{equation} for some continuous function $\beta$.

By integration by parts in the Poisson integral formula \eqref{e:POISSON}, in view of Proposition~\ref{propi} we obtain \begin{equation}\label{e:POISSONw}
\frac{\partial \mathcal{P}[f^*](z)}{\partial \varphi}=\int_0^{2\pi}P(r,x-\varphi)\partial_x f^*(e^{ix})dx, \ \ z=re^{i\varphi},\quad 0\le r<1.
\end{equation}

Define  \[\begin{split}V(re^{i\varphi})&:=\arg(A(z)+i B(z))\\&=\arg\left(e^{-i\beta(\varphi)}\partial_t f(z)\right)\\&=\arg\left(e^{-i\beta(\varphi)}\int_0^{2\pi} P(r,t-\varphi)  {df^*(e^{it})}\right)\\&=\arg\left(\int_0^{2\pi} P(r,t-\varphi) e^{i(\beta(t)-\beta(\varphi))} d\psi(t)\right)=\arctan\frac{A}{B},\end{split}\]
where $$A=\int_0^{2\pi} P(r,t) \sin (\beta(t+\varphi)-\beta(\varphi)) \psi'(t+\varphi)dt$$ and
 $$B=\int_0^{2\pi} P(r,t) \cos (\beta(t+\varphi)-\beta(\varphi)) \psi'(t+\varphi)dt.$$
 Prove now that $V$ has a continuous extension to $\mathbf{T}$ with \begin{equation}\label{vv}V(e^{i\varphi})\equiv 0.\end{equation} This statement is equivalent with the main conclusion of our theorem.

 Prove that $\lim_{|z|\to 1}V(z)=0$, where the limit is unrestricted. We will prove that for given $0<\epsilon\le 2$ there is $\delta=\delta(\epsilon)$ such that if $0<1-|z|<\delta$  we have $$\frac{|A(z)|}{|B(z)|}\le \epsilon, \quad B(z)>0.$$

 Since $\beta$ is continuous, there is $\varepsilon=\varepsilon(\epsilon)>0$ such that

 \begin{equation}\label{sinus}|\sin[\beta(\varphi+t)-\beta(\varphi)]|\le \frac{\epsilon}{8\|\psi'\|_{L^1}{ \|1/\psi'\|_{L^{\kappa}}}}, \quad  |e^{it}-1|\le \varepsilon. \end{equation}

 Further,  we have \begin{equation}\label{ac}\int_{|e^{it}-1|>\varepsilon} P(r,t) \psi'(t+\varphi)dt\le \frac{1-r^2}{\varepsilon^2}\|\psi'\|.\end{equation} Thus  \begin{equation}\label{AA}\begin{split}
 |A(z)|&\le \int_0^{2\pi} P(r,t)|\sin[\beta(t+\varphi)-\beta(\varphi)]|\psi'(t+\varphi)dt\\&=
 \int_{|e^{it}-1|>\varepsilon} P(r,t)|\sin[\beta(t+\varphi)-\beta(\varphi)]|\psi'(t+\varphi)dt\\&+\int_{|e^{it}-1|\le \varepsilon} P(r,t)|\sin[\beta(t+\varphi)-\beta(\varphi)]|\psi'(t+\varphi)dt\\&\le
 \frac{1-r^2}{\varepsilon^2}\|\psi'\|+\frac{\epsilon}{8\|\psi'\|\left({ \|1/\psi'\|_{L^{\kappa}}}\right)^{\frac{1+{\kappa}}{{\kappa}}}}\|\psi'\|.
 \end{split}
 \end{equation}
  Since $\beta$ is continuous  there is $\varepsilon>0$ satisfying \eqref{sinus} and
 \begin{equation}\label{cosinus}\cos[\beta(t+\varphi)-\beta(\varphi)]>\frac{1}{2}, \quad  |e^{it}-1|\le \varepsilon.\end{equation}
So we have  \[\begin{split}\int_{|e^{it}-1|\le\varepsilon} P(r,t) &\cos[\beta(t+\varphi)-\beta(\varphi)] \psi'(t+\varphi)dt\\
 &\ge \frac{1}{2}\int_{|e^{it}-1|\le\varepsilon}P(r,t) \psi'(t+\varphi)dt.\end{split}\]

Since $\frac{1}{\psi'(t)}\in L^\kappa[0,2\pi]$, by using H\"older inequality with coefficients  $p=\frac{1+\kappa}{\kappa}$ and $q=\kappa+1$, (c.f. \eqref{pq}) we obtain
\[\begin{split}&\int_{|e^{it}-1|\le \varepsilon}P(r,t) dt\\&\le \left(\int_{|e^{it}-1|\le \varepsilon}P(r,t)\psi'(t+\varphi) dt\right)^{\frac{{\kappa}}{1+{\kappa}}}
  \left(\int_{|e^{it}-1|\le \varepsilon}P(r,t)|\psi'(t+\varphi)|^{-{\kappa}} dt\right)^{\frac{1}{1+{\kappa}}}\\&\le \left(\int_{|e^{it}-1|\le \varepsilon}P(r,t) \psi'(t+\varphi)dt\right)^{\frac{{\kappa}}{1+{\kappa}}}
  \left(\int_0^{2\pi}P(r,t)|\psi'(t+\varphi)|^{-{\kappa}} dt\right)^{\frac{1}{1+{\kappa}}}\\&\le\|1/(\psi')^\kappa\|^{\frac{{1}}{1+{\kappa}}}_{L^{1}}\left(\int_{|e^{it}-1|\le \varepsilon}P(r,t) \psi'(t+\varphi)dt\right)^{\frac{{\kappa}}{1+{\kappa}}}.\end{split}\]
Thus \begin{equation}\label{prima}
\int_{|e^{it}-1|\le \varepsilon}P(r,t) \psi'(t+\varphi)dt\ge\frac{\left(\int_{|e^{it}-1|\le \varepsilon}P(r,t) dt\right)^{\frac{1+{\kappa}}{{\kappa}}}}{ \|1/(\psi')^\kappa\|_{L^{1}}}.
\end{equation}
By \eqref{ac} and \eqref{prima} we obtain
\begin{equation}\label{BB}
B(z)\ge \frac{\left(\int_{|e^{it}-1|\le \varepsilon}P(r,t) dt\right)^{\frac{1+{\kappa}}{{\kappa}}}}{ \|1/(\psi')^\kappa\|_{L^{1}}}-\frac{1-r^2}{\varepsilon^2}\|\psi'\|.
\end{equation}
From \eqref{AA} and \eqref{BB} we obtain
\begin{equation}\label{AB}\begin{split}
\frac{|A(z)|}{|B(z)|}&\le \frac{\frac{1-r^2}{\varepsilon^2}\|\psi'\|+\frac{\epsilon}{8\|\psi'\|\left({ \|1/(\psi')^\kappa\|_{L^{1}}}\right)^{\frac{1+{\kappa}}{{\kappa}}}}\|\psi'\|
}{\frac{\left(\int_{|e^{it}-1|\le \varepsilon}P(r,t) dt\right)^{\frac{1+{\kappa}}{{\kappa}}}}{ \|1/(\psi')^\kappa\|_{L^{1}}}-\frac{1-r^2}{\varepsilon^2}\|\psi'\|}\\&= \frac{\frac{1-r^2}{\varepsilon^2}+\frac{\epsilon}{8\|\left({ \|1/(\psi')^\kappa\|_{L^{1}}}\right)^{\frac{1+{\kappa}}{{\kappa}}}}
}{I-\frac{1-r^2}{\varepsilon^2}},\end{split}
\end{equation}
where $$I=\frac{\left(\int_{|e^{it}-1|\le \varepsilon}P(r,t) dt\right)^{\frac{1+{\kappa}}{{\kappa}}}}{ \|\psi'\|\|1/(\psi')^\kappa\|_{L^{1}}}.$$
 Since $$\int_{|e^{it}-1|\le \varepsilon} P(r,t)dt=\frac{2}{\pi} \arctan\left(\frac{1 + r }{1-r}\frac{\varepsilon}{\sqrt{2-\varepsilon} \sqrt{2+\varepsilon}}\right)$$ it follows that there is $\rho>0$ such that for $r>\rho$, $$\left(\int_{|e^{it}-1|\le \varepsilon} P(r,t)dt\right)^{\frac{1+\kappa}{\kappa}}\ge \frac{1}{2}.$$
 We have that for $r>\rho$  $$I\ge \frac{1}{2\|\psi'\|{ \|1/(\psi')^\kappa\|_{L^{1}}}}.$$
 Hence \begin{equation}\label{AB1}\begin{split}
\frac{|A(z)|}{|B(z)|}&\le  \frac{\frac{\epsilon}{8\|\psi'\|{ \|1/(\psi')^\kappa\|_{L^{1}}}}+\frac{1-r^2}{\varepsilon^2}
}{I-\frac{1-r^2}{\varepsilon^2}}.\end{split}
\end{equation}
  Also we can assume that $\rho$ is such that for $r>\rho$ we have \begin{equation}\label{vare}\frac{1-r^2}{\varepsilon^2}\le \frac{\epsilon}{8\|\psi'\|{ \|1/(\psi')^\kappa\|_{L^{1}}}}.\end{equation}
 From \eqref{AB1} and \eqref{vare} we obtain for $r=|z|>\rho$ \begin{equation}
\frac{|A(z)|}{|B(z)|}\le\frac{\epsilon}{2}+\frac{\epsilon}{2}=\epsilon.\end{equation}

b) The converse part of theorem is elementary and we do not need for the mapping to be quasiconformal.  See the book of Pommerenke \cite[p.~44]{boundary} for its counterpart for conformal mappings. Assume that $\arg(\frac{\partial_tf(re^{it})}{z})$ has a continuous extension to the boundary and prove that $\gamma$ is $C^1$.
Let $$V(z) = \arg \left(\frac{\partial_tf(z)}{z}\right)$$ be continuous in $\overline{\mathbf{U}}\setminus\{0\}$ and let $1/2<r<1$.
 For fixed $z=re^{is}$ define
$$W(re^{it}) =:\frac{f(re^{it}) - f(re^{is})}{t-s}e^{-iV(z)}=\int_s^t\frac{\partial f(re^{i\tau})}{\partial \tau} e^{-iV(z)} \frac{d\tau}{t-s}.$$
$$=\int_s^t\left|\frac{\partial f(re^{i\tau})}{\partial \tau}\right| e^{i(V(re^{i\tau})-V(re^{is}))} \frac{d\tau}{t-s}.$$

Since $V$ is uniformly continuous in $1/2\le |z|\le 1$, it follows that for $\varepsilon>0$ ($\varepsilon<\pi/2$), there is $\delta=\delta(\varepsilon)$ such that if
$1/2\le  |z|, |w|\le 1$ and $|z-w|<\delta$, then $|V(z)-V(w)|<\varepsilon$. Hence if $|t-s|<\delta$ and $r\ge 1/2$ we have
$$|\sin W(re^{it})|\le |\sin\varepsilon|\int_s^t\left|\frac{\partial f(re^{i\tau})}{\partial \tau}\right|  \frac{d\tau}{t-s}$$ and
$$\cos W(re^{it})\ge \cos\varepsilon\int_s^t\left|\frac{\partial f(re^{i\tau})}{\partial \tau}\right|  \frac{d\tau}{t-s}.$$

Therefore, for $|t-s|<\delta$ and $1/2\le r\le 1$ we have $$|\arg W(re^{it})|=|\arctan \frac{\sin W(re^{it})}{\cos W(re^{it})}|\le \varepsilon.$$
This means in particular that if $|t-s|<\delta$ then $$\left|\arg\frac{f(e^{it}) - f(e^{is})}{t-s}-V(e^{is})\right|\le \varepsilon.$$
It follows that $\gamma$ has at $f(e^{is})$ a tangent with direction angle $V(e^{is})$ which varies continuously.

 \end{proof}
 \subsection{Acknowledgement} I would like to thank to professor Carlos Kenig for useful discussions.

\end{document}